\newtheorem{theorem}{Theorem}
\newtheorem{lemma}[theorem]{Lemma}
\newtheorem{corollary}[theorem]{Corollary}
\newtheorem{rem}[theorem]{Remark}
\newcommand{\bbB}{{\mathbb B}}
\newcommand{\bbI}{{\mathbb I}}
\newcommand{\bbK}{{\mathbb K}}
\newcommand{\R}{{\mathbb R}}
\newcommand{\bbU}{{\mathbb U}}
\newcommand{\bfC}{\mathbf{C}}
\newcommand{\bfL}{\mathbf{L}}
\newcommand{\bfW}{\mathbf{W}}
\newcommand{\bfb}{\mathbf{b}}
\newcommand{\bfn}{\mathbf{n}}
\newcommand{\bfu}{\mathbf{u}}
\newcommand{\bfx}{\mathbf{x}}
\newcommand{\bfy}{\mathbf{y}}
\newcommand{\bfphi}{\mathbf{\phi}}
\newcommand{\cB}{\mathcal{B}}
\newcommand{\cF}{\mathcal{F}}
\newcommand{\cM}{\mathcal{M}}
\newcommand{\cS}{\mathcal{S}}
\newcommand{\cT}{\mathcal{T}}
\newcommand{\rmd}{\mathrm{d}}
\newcommand{\bfzero}{\mathbf{0}}
\newcommand{\br}{\hbox to 0.7pt{}}
\newcommand{\Ls}{\bfL_{\sigma}}
\newcommand{\Cns}{\bfC_{0,\sigma}}
\newcommand{\Ws}{\bfW_{\sigma}}
\newcommand{\bfbr}{\bfb^{\bfx_0}_{\rm r}}
\newcommand{\bfbp}{\bfb^{\bfx_0}_{\rm p}}
\newcommand{\bfur}{\bfu^{\bfx_0}_{\rm r}}
\newcommand{\bfup}{\bfu^{\bfx_0}_{\rm p}}
\newcommand{\lclosed}{[\hspace{0.5pt}}
\newcommand{\rclosed}{\hspace{0.5pt}]}
\newcommand{\qand}{\quad \text{ and } \quad}
\DeclareMathOperator*{\divg}{div}
\DeclareMathOperator*{\curl}{curl}
\DeclareMathOperator*{\esssup}{ess\,sup}
\begin{document}
\baselineskip=18pt

\title[New Regularity Criteria for Weak Solutions to the MHD Equations]{New Regularity Criteria for Weak Solutions to the MHD Equations in Terms of an Associated Pressure}

\author{Ji\v{r}\'{\i} Neustupa \and Minsuk Yang}

\address{J. Neustupa: 
Czech Academy of Sciences, Institute of Mathematics, Praha, Czech Republic}
\email{neustupa@math.cas.cz}

\address{M. Yang: 
Yonsei University, Department of Mathematics, Seoul, Republic of Korea}
\email{m.yang@yonsei.ac.kr}

\begin{abstract}
We prove that if $0<T_0<T\leq\infty$, $(\mathbf{u},\mathbf{b},p)$ 
is a suitable weak solution of the MHD equations in $\mathbb{R}^3\times(0,T)$ 
and either $\mathcal{F}_{\gamma}(p_-)\in L^{\infty}(0,T_0;\,L^{3/2}(\mathbb{R}^3))$ 
or $\mathcal{F}_{\gamma}((|\mathbf{u}|^2+|\mathbf{b}|^2+2p)_+) 
\in L^{\infty}(0,T_0;\, L^{3/2}(\mathbb{R}^3))$ for some $\gamma>0$, 
where $\mathcal{F}_{\gamma}(s)=s\, [\ln (1+s)]^{1+\gamma}$ 
and the subscripts ``$-$'' and ``$+$'' denote the negative 
and the nonnegative part, respectively, then the solution 
$(\mathbf{u},\mathbf{b},p)$ has no singular points 
in $\mathbb{R}^3\times(0,T_0]$. The results are also valid, 
as a special case, for the Navier--Stokes equations.
\end{abstract}

\maketitle

\section{Introduction and formulation of the main results} 
\label{S1}

\subsection{The system of MHD equations.} 
\label{SS1.1} 

The motion of a viscous incompressible electrically conductive 
fluid in $\R^3$ in the time interval $(0,T)$ (where $0<T\leq\infty$), 
at the absence of an external specific body force and an external 
magnetic induction, is described by the system of 
\emph{magneto-hydro-dynamical equations} 
(which is abbreviated to \emph{MHD equations})
\begin{align}
\rho\, \partial_t\bfu+\rho\br\bfu\cdot\nabla\bfu-\mu\, \curl\,
\bfb\times\bfb\ &=\ -\nabla p+\rho\br\nu\br\Delta\bfu, \label{1.1} \\ 
\partial_t\bfb-\curl\, (\bfu\times\bfb)\ &=\ -\br\xi\,
\curl\, (\curl \br\bfb), \label{1.2} \\ 
\divg\bfu\ =\ \divg\bfb\ &=\ 0. \label{1.3}
\end{align}
The system is completed by the initial conditions
\begin{equation}
\bfu\br\bigl|_{t=0}=\bfu_0 \qand \bfb\br\bigl|_{t=0}=\bfb_0.
\label{1.4}
\end{equation}
The unknowns are the velocity field $\bfu$ of the fluid, the
magnetic field $\bfb$ and the pressure $p$. The coefficients
$\rho$, $\mu$, $\nu$ and $\xi$, which are all supposed to be
positive constants, represent the density of the fluid, the
magnetic permeability, the kinematic viscosity and the magnetic
diffusivity, respectively. We may further assume, without loss of
generality, that $\rho=1$ and $\mu=1$. Then the equations
(\ref{1.1}) and (\ref{1.2}) can also be written in the form
\begin{align*}
\partial_t\bfu+\bfu\cdot\nabla\bfu-\bfb\cdot
\nabla\bfb+\nabla\bigl(p+{\textstyle\frac{1}{2}}\br |\bfb|^2\bigr)
\ &=\ \nu\Delta\bfu, \tag{1} \\
\partial_t\bfb+\bfu\cdot\nabla\bfb-\bfb\cdot\nabla\bfu\ &=\
\xi\Delta\bfb \tag{2}.
\end{align*}

\subsection{Notation.} 
\label{SS1.2}

We denote vector functions and spaces of vector functions 
by boldface letters. We denote by $\Cns(\R^3)$ the linear space 
of all infinitely differentiable divergence--free
vector functions in $\R^3$ with a compact support and 
by $\Ls^2(\R^3)$ the closure of $\Cns(\R^3)$ in $\bfL^2(\R^3)$.
We define $\Ws^{1,2}(\R^3):=\bfW^{1,2}(\R^3)\cap\Ls^2(\R^3)$.
Finally, we will denote by $c$ a general constant, which may change its value in the same proof.

\subsection{Weak solutions, suitable weak solutions, and associated pressure.} 
\label{SS1.3} 

Given $\bfu_0,\, \bfb_0 \in \Ls^2(\R^3)$, a pair 
$(\bfu,\bfb)\in \bigl[ L^{\infty}(0,T;\, \Ls^2(\R^3)) 
\cap L^2(0,T$; $\Ws^{1,2}(\R^3)) \bigr]^2$ is said
to be a \emph{weak solution} to the system
(\ref{1.1})--(\ref{1.3}) with the initial conditions
(\ref{1.4}) if the integral identities
\begin{align*}
\int_0^T\!\int_{\R^3} \bigl[
-\bfu\cdot\partial_t\bfphi & +\bfu\cdot\nabla\bfu\cdot\bfphi-\bfb\cdot
\nabla\bfb\cdot\bfphi+\nu\br\nabla\bfu:\nabla\bfphi \bigr]\;
\rmd\bfx\, \rmd t 
\ =\ \int_{\R^3} \bfu_0\cdot\bfphi(\, .\, ,0)\; \rmd\bfx \\
\int_0^T\!\int_{\R^3} \bigl[ -\bfb\cdot\partial_t\bfphi & +
\bfu\cdot\nabla\bfb\cdot\bfphi-\bfb\cdot\nabla\bfu\cdot\bfphi+
\xi\br\nabla\bfb:\nabla\bfphi\bigr]\; \rmd\bfx\, \rmd t 
\ =\ \int_{\R^3} \bfb_0\cdot\bfphi(\, .\, ,0)\; \rmd\bfx
\end{align*}
hold for all $\bfphi\in C^{\infty}_0\bigl( [0,T);\,
\Cns(\R^3)\bigr)$.

A distribution $p$ in $Q_T:=\R^3\times(0,T)$ is said to be an {\it
associated pressure} if $\bfu$, $\bfb$ and $p$ satisfy equations
(\ref{1.1})--(\ref{1.3}) in the sense of distributions in $Q_T$.

If $(\bfu,\bfb)$ is a weak solution, an associated pressure
$p$ is a locally integrable function in $Q_T$ such that the
product $p\br\bfu$ is also locally integrable in $Q_T$, and
$\bfu,\, \bfb,\, p$ satisfy the so called \emph{localized energy
inequality}
\begin{align*}
&\int_{Q_T}2\br\bigl(\nu\, |\nabla\bfu|^2+\xi\,
|\nabla\bfb|^2\bigr)\, \psi\; \rmd\bfx\, \rmd t \\
&\leq\ \int_{Q_T}
\bigl[\br |\bfu|^2\, (\partial_t\psi+\nu\Delta\psi)+
|\bfb|^2\, (\partial_t\psi+\xi\Delta\psi) \\
&\qquad \qquad +\, \bigl(|\bfu|^2+|\bfb|^2+2p\bigr)\,
(\bfu\cdot\nabla\psi)-2\mu\br (\bfu\cdot\bfb)\,
(\bfb\cdot\nabla\psi)\br\bigr]\; \rmd\bfx\, \rmd t 
\end{align*}
for every nonnegative infinitely differentiable scalar function
$\psi$ compactly supported in $Q_T$, then we call $(\bfu,\bfb,p)$
a \emph{suitable weak solution} to the system
(\ref{1.1})--(\ref{1.3}).

Note that the existence of a weak solution can be proven by the
same method as for the Navier--Stokes equations. The existence of
an associated pressure and its smoothness are studied in the
paper \cite{NeYa} by J.~Neustupa and M.~Yang. The sketch of the
construction of a suitable weak solution (which is also analogous
to the Navier--Stokes equations) can be found in the paper
\cite{HeXi2} by Ch.~He and Z.~Xin.

\subsection{Regular points and singular points.} 
\label{SS1.4} 

Throughout the paper we assume that $(\bfu,\bfb,p)$ is a suitable weak
solution to the MHD initial-value problem
(\ref{1.1})--(\ref{1.4}) in $Q_T$. A space-time point
$(\bfx_0,t_0)\in Q_T$ is said to be a \emph{regular point} of
the solution $(\bfu,\bfb,p)$ if there exists a neighborhood
$U(\bfx_0,t_0)\subset Q_T$ of this point such that both $\bfu$
and $\bfb$ are essentially bounded in $U(\bfx_0,t_0)$. Other
points of $Q_T$ are called {singular points.} It follows from
the paper \cite{MaNiSh} by A.~Mahalov, B.~Nicolaenko and
T.~Shilkin that if $(\bfx_0,t_0)$ is a regular point of the
solution $(\bfu,\bfb,p)$ then there exists a neighborhood
$U(\bfx_0,t_0)$, such that $\bfu$ and $\bfb$ together with all
their spatial derivatives (of all orders) are
H\"older--continuous in $U(\bfx_0,t_0)$. Moreover, J.~Neustupa
and M.~Yang \cite{NeYa} showed that the neighborhood
$U(\bfx_0,t_0)$ can be chosen so that $\partial_t\bfu$ and $p$ 
together with all their spatial derivatives (of all orders) 
are essentially bounded in $U(\bfx_0,t_0)$.

Suppose that $M\subset(0,T)$ and let us denote by $\cS_M$ the
set of all singular points of the solution $(\bfu,\bfb,p)$ in
$\R^3\times M$ and by $\cS(t_0)$ (for $t_0\in(0,T)$) the set of
all points $\bfx\in\R^3$ such that $(\bfx,t_0)\in\cS_{(0,T)}$.
(It should be noted that the question whether $\cS_{(0,T)}$ is
nonempty is open.) Obviously, the set $\cS_{(0,T)}$ is closed
in $\R^3\times(0,T)$ and the set $\cS(t_0)$ is closed in
$\R^3$. Ch.~He and Z.~Xin \cite{HeXi2} derived a series of
criteria for regularity of the solution $(\bfu,\bfb,p)$ at a
given point $(\bfx_0,t_0)\in Q_T$, from which one can deduce
that the $1$-dimensional Hausdorff measure of $\cS_{(0,T)}$ is
zero.

\subsection{The choice of the pressure.} \label{SS1.5} 
As the pressure $p$ can be modified by an additive function of $t$, 
we fix $p$ so that
\begin{align}
p(\bfx,t) + \frac{1}{2}\, |\bfb(\bfx,t)|^2 \
=\ \frac{1}{4\pi}\int_{\R^3}\frac{1}{|\bfx-\bfy|}\
\divg\divg\bigl[\bfu(\bfy,t)\otimes\bfu(\bfy,t)-\bfb(\bfy,t)\otimes
\bfb(\bfy,t)\bigr]\; \rmd\bfy. \label{1.5}
\end{align}
This formula comes from the equation
\begin{displaymath}
\Delta\bigl(p+{\textstyle\frac{1}{2}}\br|\bfb|^2\bigr)\ =\
\divg\divg[\bfu\otimes\bfu-\bfb\otimes\bfb]\ \equiv\
\partial_i\partial_j(u_iu_j-b_ib_j),
\end{displaymath}
which we obtain if we apply the operator ${\rm div}$ to
equation (\ref{1.1}). We explain in subsection \ref{SS2.1} that
formula (\ref{1.5}) has a sense for all $(\bfx,t)\in
Q_T\smallsetminus\cS_{(0,T)}$. The pressure given by
(\ref{1.5}) satisfies $\nabla p\in L^r(\delta,T;\,
\bfL^s(\R^3))$ for all $0<\delta<T$, $1<r<2$,
and $1<s<3/2$ satisfying $2/r+3/s=4$ (see Theorem 3 in
\cite{NeYa}). The functions $\bfu,\, \bfb$ are supposed to have
been modified on a set of measure zero so that both $\bfu$ and
$\bfb$ are weakly continuous from $(0,T)$ to $\Ls^2(\R^3)$.

\subsection{The results of this paper.} \label{SS1.6}
To neatly formulate
our main theorem and write its proof, we introduce the following
function. Let $\gamma$ be a positive parameter and define 
for $s\geq 0$ the function 
\begin{equation}
\cF_{\gamma}(s)\ :=\ s\ \lclosed\ln{}(1+s)\rclosed^{1+\gamma}.
\label{1.6}
\end{equation}
We note that $\cF_{\gamma}$ is increasing and strictly convex 
on $[0,\infty)$.

Here is our main theorem:

\begin{theorem} \label{T1}
Let $\bfu_0,\, \bfb_0\in\Ws^{1,2}(\R^3)$ and let $(\bfu,\bfb,p)$
be a suitable weak solution of the MHD initial-value problem
(\ref{1.1})--(\ref{1.4}) in $Q_T$. Let $0<T_0<T$ and suppose that
there exists $\gamma>0$ so that at least one of the conditions

\begin{list}{}
{\setlength{\topsep 1mm}
\setlength{\itemsep 1mm}
\setlength{\leftmargin 24pt}
\setlength{\rightmargin 0pt}
\setlength{\labelwidth 16pt}}

\item[a) ]
$\cF_{\gamma}(p_-)\in L^{\infty}(0,T_0;\, L^{3/2}(\R^3))$,

\item[b) ]
$\cF_{\gamma}(\cB_+)\in L^{\infty}(0,T_0;\, L^{3/2}(\R^3))$, \
where \ $\cB:= \frac{1}{2}\br|\bfu|^2+\frac{1}{2}\br|\bfb|^2+p$

\end{list}
holds. Then the set $\cS_{(0,T_0]}$ of singular points of the
solution $(\bfu,\bfb,p)$ in $\R^3\times(0,T_0]$ is empty.
Consequently, the functions $\bfu$ and $\bfb$ are H\"older
continuous in $\R^3\times(0,T_0]$.
\end{theorem}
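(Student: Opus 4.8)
The plan is to prove the theorem by contradiction at the level of a single suspected singular point, reducing to a local smallness condition that the standard partial-regularity theory guarantees to be sufficient for regularity. Recall the Caffarelli--Kohn--Nirenberg type $\varepsilon$-regularity theorem (in the MHD form established in \cite{HeXi2,MaNiSh}): there is an absolute constant $\varepsilon_*>0$ such that if
\begin{equation*}
\limsup_{r\to 0^+}\, \frac{1}{r}\int_{Q_r(\bfx_0,t_0)} \bigl(|\nabla\bfu|^2+|\nabla\bfb|^2\bigr)\; \rmd\bfx\, \rmd t\ <\ \varepsilon_*,
\end{equation*}
then $(\bfx_0,t_0)$ is a regular point. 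Thus it suffices to show that under hypothesis a) or b), this scaled Dirichlet energy tends to zero at every point $(\bfx_0,t_0)$ with $t_0\in(0,T_0]$.

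First I would localize the energy inequality around a fixed candidate point $(\bfx_0,t_0)$ by choosing a suitable cutoff $\psi$ concentrated in the parabolic cylinder $Q_r(\bfx_0,t_0)$, thereby bounding the scaled local energy by scaled integrals of $|\bfu|^2$, $|\bfb|^2$, $(|\bfu|^2+|\bfb|^2+2p)\,|\bfu|$, and the magnetic cross term. The first two groups are controlled, after interpolation with the global bound $(\bfu,\bfb)\in L^\infty(0,T;\Ls^2)\cap L^2(0,T;\Ws^{1,2})$, by quantities that vanish as $r\to 0$, so the essential difficulty is the pressure-velocity term. Here the key structural observation is that the \emph{sign} of the pressure (or of $\cB$) is what matters: since $\divg\bfu=0$, one may integrate by parts and write the troublesome term in a form where only $p_-$ (the part of the pressure that can destabilize the energy) or $\cB_+$ enters with the dangerous sign, and the remaining part carries a favorable sign that can be discarded or absorbed. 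This is precisely why the hypotheses are placed on $\cF_\gamma(p_-)$ or $\cF_\gamma(\cB_+)$ rather than on $p$ or $\cB$ themselves.

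The heart of the argument is then to exploit the hypothesis $\cF_\gamma(\,\cdot\,)\in L^\infty(0,T_0;L^{3/2})$. Because $\cF_\gamma(s)=s\,[\ln(1+s)]^{1+\gamma}$ grows strictly faster than $s$, membership of $\cF_\gamma(p_-)$ in $L^{3/2}$ gives slightly better than $L^{3/2}$ integrability of $p_-$ itself; quantitatively, I would use a Young-type / Orlicz inequality to split the pressure term as
\begin{equation*}
\int p_-\,|\bfu|\; \rmd\bfx\, \rmd t\ \leq\ \int \cF_\gamma(p_-)\; \rmd\bfx\, \rmd t\ +\ \int \Phi_\gamma\bigl(|\bfu|\bigr)\; \rmd\bfx\, \rmd t,
\end{equation*}
where $\Phi_\gamma$ is the Young conjugate of $\cF_\gamma$, which grows like $e^{c\,t^{1/(1+\gamma)}}$. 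The point is that the local $L^\infty L^{3/2}$ control of $\cF_\gamma(p_-)$, combined with the scaling of the cylinder $Q_r$, makes the contribution of the pressure term to the scaled energy $o(1)$ as $r\to 0$, provided the velocity contribution through $\Phi_\gamma(|\bfu|)$ can be absorbed. To absorb it I would invoke the weak continuity of $\bfu,\bfb$ into $\Ls^2$ and a bootstrap: on the complement of $\cS_{(0,T_0]}$ the solution is already smooth, so $\Phi_\gamma(|\bfu|)$ is locally integrable there, and one propagates regularity down to the critical time slice $t_0$ using the global energy bounds at $t=0$ coming from $\bfu_0,\bfb_0\in\Ws^{1,2}$.

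The main obstacle I anticipate is the pressure term, and specifically making the Orlicz duality quantitative enough to beat the parabolic scaling. One must track how $\int_{Q_r}\cF_\gamma(p_-)$ scales: the raw $L^\infty_t L^{3/2}_x$ norm gives, after Hölder in space over a ball of radius $r$ and integration over a time interval of length $r^2$, a factor that by itself is exactly critical, and it is the strict super-linearity encoded by the logarithmic factor $[\ln(1+s)]^{1+\gamma}$ that upgrades this to a genuine $o(1)$ gain. Handling case b) requires the additional step of re-expressing the energy flux in terms of $\cB=\frac12|\bfu|^2+\frac12|\bfb|^2+p$, so that the quadratic velocity and magnetic terms in the localized energy inequality combine with $2p$ into $2\cB$, after which only $\cB_+$ appears with the unfavorable sign and the same Orlicz estimate applies. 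Getting the convexity bookkeeping and the conjugate function $\Phi_\gamma$ exactly right, so that the velocity remainder is truly absorbable rather than merely finite, is where the real work lies.
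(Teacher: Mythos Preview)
Your proposal has a genuine gap, and the overall strategy diverges from what actually makes the argument go through.

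The Orlicz--Young splitting you propose,
\[
\int p_-\,|\bfu|\ \leq\ \int \cF_\gamma(p_-)\ +\ \int \Phi_\gamma(|\bfu|),
\]
is not usable here: the Young conjugate $\Phi_\gamma$ of $\cF_\gamma(s)=s[\ln(1+s)]^{1+\gamma}$ grows like $\exp\bigl(c\,s^{1/(1+\gamma)}\bigr)$, and for a weak solution near a putative singular point you have no control whatsoever on an exponential moment of $|\bfu|$. Saying that $\Phi_\gamma(|\bfu|)$ is locally integrable ``on the complement of $\cS_{(0,T_0]}$'' and that one can ``propagate regularity down to the critical time slice'' is circular: the only time slice at which the estimate is needed is exactly the one where you do not yet know regularity. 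Moreover, your claim that after an integration by parts ``only $p_-$ enters with the dangerous sign'' in the localized energy inequality is not correct as stated; the flux term $\int (|\bfu|^2+|\bfb|^2+2p)\,\bfu\cdot\nabla\psi$ has no sign, and no simple integration by parts (using only $\divg\bfu=0$) isolates $p_-$ there.

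The paper does \emph{not} attack the localized energy inequality at all. The central device is an exact identity for the pressure (derived from the representation $p+\tfrac12|\bfb|^2=(-\Delta)^{-1}\partial_i\partial_j(u_iu_j-b_ib_j)$ by testing against $|\bfx_0-\bfy|^{-\alpha}$ over a ball; see the paper's Appendix) which for $\alpha=0,1$ gives
\[
\frac{1}{R}\int_{B_R(\bfx_0)}\Bigl(3p+|\bfu|^2+\tfrac12|\bfb|^2\Bigr)\,\rmd\bfx
\ =\ \int_{B_R(\bfx_0)}\frac{2p+|\bfup|^2+|\bfbr|^2}{|\bfx-\bfx_0|}\,\rmd\bfx
\ =\ \int_{\R^3\setminus B_R(\bfx_0)}\frac{R^2}{|\bfx-\bfx_0|^3}\,\bigl(2|\bfur|^2-|\bfup|^2-2|\bfbr|^2+|\bfbp|^2\bigr)\,\rmd\bfx.
\]
This is where the sign mechanism really lives: adding $3p_-/R$ to the left-hand side one controls $\frac{1}{R}\int_{B_R}(|\bfu|^2+\tfrac12|\bfb|^2)$ by a far-field integral (bounded by the global $L^2$ energy) plus $\int_{B_R}\frac{p_-}{|\bfx-\bfx_0|}$. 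The hypothesis $\cF_\gamma(p_-)\in L^\infty(0,T_0;L^{3/2})$ enters only through the potential-type statement (Corollary~\ref{C1}, via Lemmas~\ref{L1.1} and~\ref{L1.2}) that
\[
\lim_{r\to 0^+}\ \sup_{t}\ \sup_{\bfx_0}\ \int_{B_r(\bfx_0)}\frac{p_-(\bfx,t)}{|\bfx-\bfx_0|}\,\rmd\bfx\ =\ 0,
\]
with the logarithmic gain $[\ln(1+s)]^{1+\gamma}$ being exactly what makes the Riesz-potential integral vanish. One then feeds the resulting smallness of $\sup_t \frac{1}{R}\|\bfu(\cdot,t)\|_{2;B_R(\bfx_0)}^2$ into the Kang--Lee $\varepsilon$-criterion (Lemma~\ref{L2.2}), not the CKN scaled-gradient criterion you invoke. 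Case~b) is handled by the same identity rewritten in terms of $\cB$. The substantial technical work is making the identity and the $p_-$ potential estimate valid \emph{at} the epoch of irregularity $t_0$, which requires proving strong $L^2$ left-continuity of $\bfu,\bfb$ at $t_0$ and continuity of $p$ off the singular set.
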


\noindent
Note that the subscripts ``$-$'' and ``$+$'' denote the
negative and nonnegative part, respectively. As the negative part is
taken ``positively'', e.g.~$p$ satisfies $p=p_+-p_-$.
The function $\cB$ is the so called \emph{Bernoulli pressure.}
Obviously, condition a), respectively b), is satisfied if there
exists $q>\frac{3}{2}$ such that $p_-$, respectively $\cB_+$, is
in $L^{\infty}(0,T_0;\, L^q(\R^3))$.
It follows from Theorem \ref{T1} that if $\cF_{\gamma}(p_-)$ or
$\cF_{\gamma}(\cB_+)$ lies in $L^{\infty}(0,T;\,
L^{3/2}(\R^3))$ then $\cS_{(0,T)}=\emptyset$.

\subsection{Comparison with previous results.} \label{SS1.7}
In many papers, various authors have formulated sufficient
conditions for regularity of a weak solution to the Navier--Stokes
equations in terms of an associated pressure. In this context, we
quote the papers \cite{BeGa}, \cite{BoCoPa}, \cite{CaFaZh},
\cite{ChaeLee}, \cite{ChZha}, \cite{FaOz}, \cite{FaJiNi},
\cite{KaLee1}, \cite{KaLee2}, \cite{NeNe}, \cite{Ne1},
\cite{SeSve}, \cite{Str2} and \cite{Su}. A typical idea used in
most of the papers is to multiply the Navier--Stokes equation by
the function $\bfu\, |\bfu|^{\alpha}$ with an appropriate
$\alpha>0$ and then to integrate over the spatial domain where
the equations are considered. Then the integral of 
$(\bfu\cdot\nabla\bfu)\cdot\bfu\, |\bfu|^{\alpha}$ is equal to
zero whenever $\bfu$ satisfies the no--slip boundary condition. 
This method, however, fails in the case of the MHD
equations. The reason is that the momentum equation (\ref{1.1})
contains, in addition to $\bfu\cdot\nabla\bfu$, also the nonlinear
term $\bfb\cdot\nabla\bfb$, and this term multiplied by $\bfu\,
|\bfu|^{\alpha}$ does not lead to zero. This cannot be
compensated by equation (\ref{1.2}) multiplied e.g.~by $\bfb\,
|\bfb|^{\alpha}$ or anything else. Of papers, based on another
method than is the sketched idea, we quote \cite{NeNe}, \cite{Ne1}
and \cite{SeSve}. In the last cited paper, G.~Seregin and
V.~\v{S}ver\'ak consider a suitable weak solution $\bfu$, $p$ to
the Navier--Stokes equations in $\R^3\times(0,\infty)$. The
authors say that a scalar function
$g:\R^3\times(0,\infty)\to[0,\infty)$ satisfies condition (C) if
to any $t_0>0$ there exists $R_0>0$ such that
\begin{displaymath}
A(t_0)\ :=\ \sup_{\bfx_0\in\R^3}\ \sup_{t_0-R_0^2\leq t\leq t_0}\
\int_{|\bfx-\bfx_0|<R_0}\frac{g(\bfx,t)}{|\bfx-\bfx_0|}\;
\rmd\bfx\ <\ \infty
\end{displaymath}
and for each fixed $\bfx_0\in\R^3$ and each fixed $R\in(0,R_0]$,
the function
\begin{displaymath}
t\ \longmapsto\ \int_{|\bfx-\bfx_0|<R}\frac{g(\bfx,t)}
{|\bfx-\bfx_0|}\; \rmd\bfx
\end{displaymath}
is left continuous at $t_0$. The main result of
\cite{SeSve} says that if there exists $g$ satisfying
condition (C) so that the normalized pressure
\begin{displaymath}
p(\bfx,t)\ :=\ \frac{1}{4\pi}\int_{\R^3}\frac{1}{|\bfx-\bfy|}\
[\partial_iu_j\, \partial_ju_i](\bfy,t)\; \rmd\bfy
\end{displaymath}
satisfies
\begin{equation}
|\bfu(\bfx,t)|^2+2p(\bfx,t)\ \leq g(\bfx,t) \qquad \mbox{for all}\
\bfx\in\R^3,\ 0<t<\infty \label{1.7}
\end{equation}
or
\begin{equation}
p(\bfx,t)\ \geq\ -g(\bfx,t) \qquad \mbox{for all}\ \bfx\in\R^3,\
0<t<\infty \label{1.8}
\end{equation}
then $\bfu$ is H\"older-continuous in $\R^3\times(0,\infty)$,
i.e.~$\bfu$ is regular. The method has been extended by K.~Kang
and J.~Lee to the MHD equations (\ref{1.1})--(\ref{1.3}). (See
Theorem 1.3 in \cite{KaLee3}.) The first sufficient condition
for regularity of a suitable weak solution $\bfu$, $\bfb$, $p$
of the MHD system (\ref{1.1})--(\ref{1.3}) formulated in
\cite{KaLee3} coincides with (\ref{1.8}). The second condition
is similar to (\ref{1.7}): it requires
\begin{equation}
|\bfu(\bfx,t)|^2+|\bfb(\bfx,t)|^2+2p(\bfx,t)\ \leq g(\bfx,t)
\qquad \mbox{for all}\ \bfx\in\R^3,\ 0<t<\infty. \label{1.9}
\end{equation}
Our conditions a) and b), used in Theorem \ref{T1}, are weaker
than conditions (\ref{1.8}) and (\ref{1.9}) from paper
\cite{KaLee3}, respectively. Thus, our Theorem \ref{T1}
generalizes Theorem 1.3 from paper \cite{KaLee3}. Obviously, if
one considers $\bfb\equiv\bfzero$ then our Theorem \ref{T1} also
represents a generalization of the results from \cite{SeSve}.

\subsection{Two auxiliary results.} \label{SS1.8}
We finish this section by giving two lemmas, 
which will later clarify the reasons for the use of
function $\cF_{\gamma}$ in conditions a) and b) of Theorem
\ref{T1}.

\begin{lemma} 
\label{L1.1}
Let $\gamma > 0$ and $f$ a nonnegative measurable function in $\R^3$.
If $\cF_{\gamma}(f)\in L^{3/2}(\R^3)$, then there exists 
$R_\gamma \in (0,1]$ such that
\begin{equation}
\sup_{0<r<R_\gamma} \sup_{\bfx_0\in\R^3} 
\left(\frac{(\ln r^{-1})^{1+\gamma}}{r} 
\int_{B_r(\bfx_0)} f(\bfx)\; \rmd\bfx\right)\ < \infty. 
\label{1.10}
\end{equation}
\end{lemma}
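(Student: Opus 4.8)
The plan is to bound the local integral of $f$ over a small ball by a quantity controlled by $\|\cF_{\gamma}(f)\|_{L^{3/2}}$, exploiting the superlinear growth that $\cF_{\gamma}$ adds beyond the identity. The key obstacle is that a naive H\"older estimate gives
\[
\int_{B_r(\bfx_0)} f\; \rmd\bfx
\ \leq\ \|f\|_{L^{3/2}(B_r)}\, |B_r|^{1/3}
\ \leq\ c\, r\, \|f\|_{L^{3/2}(\R^3)},
\]
which only yields $r^{-1}\int_{B_r} f \leq c\,\|f\|_{L^{3/2}}$, missing the crucial extra logarithmic factor $(\ln r^{-1})^{1+\gamma}$. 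So the heart of the argument is to extract that logarithm, which must come from the large values of $f$ where the logarithmic weight in $\cF_{\gamma}$ is effective.

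First I would split the ball into the region where $f$ is small and the region where $f$ is large, with a threshold depending on $r$. On the set $\{f \leq \lambda\}$ with $\lambda = \lambda(r)$ chosen suitably, the contribution is at most $\lambda\, |B_r| = c\,\lambda\, r^3$, and we want this term to be dominated by $r\,(\ln r^{-1})^{-(1+\gamma)}$, which suggests taking $\lambda$ comparable to $r^{-2}(\ln r^{-1})^{-(1+\gamma)}$. On the complementary set $\{f > \lambda\}$, I would use that for $s \geq \lambda$ one has $f \leq \cF_{\gamma}(f)\,[\ln(1+\lambda)]^{-(1+\gamma)}$, so
\[
\int_{\{f>\lambda\}\cap B_r} f\; \rmd\bfx
\ \leq\ \frac{1}{[\ln(1+\lambda)]^{1+\gamma}}
\int_{B_r} \cF_{\gamma}(f)\; \rmd\bfx
\ \leq\ \frac{c\, r\, \|\cF_{\gamma}(f)\|_{L^{3/2}}}{[\ln(1+\lambda)]^{1+\gamma}},
\]
using H\"older exactly as above on $\cF_{\gamma}(f)$ rather than on $f$.

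The point is then that with $\lambda \approx r^{-2}(\ln r^{-1})^{-(1+\gamma)}$ we have $\ln(1+\lambda) \approx 2\ln r^{-1}$ for $r$ small (the correction $-(1+\gamma)\ln\ln r^{-1}$ is lower order), so $[\ln(1+\lambda)]^{1+\gamma}$ is comparable to $(\ln r^{-1})^{1+\gamma}$. Multiplying the bound for the large-value part by $r^{-1}(\ln r^{-1})^{1+\gamma}$ therefore leaves only $c\,\|\cF_{\gamma}(f)\|_{L^{3/2}}$, while the small-value part contributes $c\,r^{-1}(\ln r^{-1})^{1+\gamma}\cdot\lambda\,r^3 = c\,r^{-1}(\ln r^{-1})^{1+\gamma}\cdot r^{-2}(\ln r^{-1})^{-(1+\gamma)}r^3 = c$, uniformly in $\bfx_0$ and in $r$. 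Taking the supremum over $\bfx_0$ is immediate since every estimate used only $\|\cF_{\gamma}(f)\|_{L^{3/2}(\R^3)}$, independent of the center.

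The main obstacle, and where I expect the care to lie, is the choice of the threshold $\lambda(r)$ and the verification that the two logarithmic factors really do match asymptotically; one must check that $\ln(1+\lambda) \geq c\,\ln r^{-1}$ holds for all $r$ below some explicit $R_\gamma \in (0,1]$, which is exactly the role of restricting to $0 < r < R_\gamma$ rather than $0 < r < 1$. I would fix $R_\gamma$ small enough that $\lambda(r) \geq 1$ and the lower-order $\ln\ln$ correction is absorbed, and then the uniform bound \eqref{1.10} follows.
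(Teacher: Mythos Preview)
Your argument is correct. The paper proves the same lemma by a somewhat different route: it fixes the threshold at $1$ (splitting $\R^3$ into $\{f\le 1\}$ and $\{f>1\}$), and on the large-value set applies Jensen's inequality to the convex function $\Phi_{\gamma}=\cF_{\gamma}^{3/2}$ to obtain
\[
\int_{B_r(\bfx_0)\cap\{f>1\}} f\;\rmd\bfx\ \le\ \frac{4\pi r^3}{3}\,\Phi_{\gamma}^{-1}(r^{-3}c_1),
\]
after which an explicit computation with $A_{\gamma}(r):=\Phi_{\gamma}^{-1}(r^{-3}c_1)$ shows that $r^2\cF_{\gamma}(A_{\gamma}(r))=c_1^{2/3}$ and that $\ln r^{-1}/\ln(1+A_{\gamma}(r))<1$ for $r$ below some $R_{\gamma}$. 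Your approach instead lets the threshold $\lambda=\lambda(r)\approx r^{-2}(\ln r^{-1})^{-(1+\gamma)}$ vary with $r$ and replaces Jensen by the pointwise bound $f\le \cF_{\gamma}(f)\,[\ln(1+\lambda)]^{-(1+\gamma)}$ on $\{f>\lambda\}$, followed by H\"older on $\cF_{\gamma}(f)$. The two arguments are essentially dual: the paper's Jensen step implicitly picks the optimal threshold through the inverse $\Phi_{\gamma}^{-1}$, while you choose $\lambda(r)$ by hand to balance the two terms. Your version is more elementary (it needs only monotonicity of $s\mapsto\ln(1+s)$, not convexity of $\cF_{\gamma}^{3/2}$), whereas the paper's version is tidier in that it avoids tracking the lower-order $\ln\ln r^{-1}$ correction you mention and makes the dependence of the final bound on $\|\cF_{\gamma}(f)\|_{3/2}$ more explicit.
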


\begin{proof}
Let $M_1:= \{\bfx \in \R^3;\ f(\bfx)\leq 1\}$ and
$M_2:=\{\bfx \in \R^3;\ f(\bfx) > 1\}$.
Define 
\begin{align*}
J_1(\bfx_0,r)\ &:=\ \frac{(\ln r^{-1})^{1+\gamma}}{r} 
\int_{B_r(\bfx_0) \cap M_1} f(\bfx)\; \rmd\bfx, \\
J_2(\bfx_0,r)\ &:=\ \frac{(\ln r^{-1})^{1+\gamma}}{r} 
\int_{B_r(\bfx_0) \cap M_2} f(\bfx)\; \rmd\bfx.
\end{align*}
We clearly have 
\[
J_1(\bfx_0,r)\ \leq\ \frac{(\ln r^{-1})^{1+\gamma}}{r} 
\int_{B_r(\bfx_0)} \rmd\bfx\ =\
\frac{4\pi}{3}\ r^2(\ln r^{-1})^{1+\gamma},
\]
and $\lim_{r\to 0+}\ r^2(\ln r^{-1})^{1+\gamma} = 0$ after a simple computation.
Thus, there exists $\rho_\gamma \in (0,1]$ such that 
\begin{equation}
\label{1.11}
\sup_{0<r<\rho_\gamma} \sup_{\bfx_0\in\R^3} J_1(\bfx_0,r)\ <\ 1.
\end{equation}
Now, we focus on estimating the main term $J_2(\bfx_0,r)$.
We put $g(\bfx):=f(\bfx)\, \chi_{M_2}(\bfx)$ and $\chi_{M_2}$ to denote the
characteristic function of the set $M_2$ so that  
\begin{equation}
J_2(\bfx_0,r)\ =\ \frac{(\ln r^{-1})^{1+\gamma}}{r} 
\int_{B_r(\bfx_0)} g(\bfx)\; \rmd\bfx.
\label{1.12}
\end{equation}
We define for $s\geq 0$ 
\begin{equation}
\Phi_{\gamma}(s)\ :=\ \cF_{\gamma}^{3/2}(s)\ =\ s^{\frac{3}{2}}\
[\ln{}(1+s)]^{\frac{3(1+\gamma)}{2}}.
\label{1.13}
\end{equation}
Notice that $\Phi_{\gamma}$ is increasing and strictly convex on $[0,\infty)$
since $\cF_{\gamma}$ has the same properties. 
By Jensen's inequality we have 
\[
\Phi_{\gamma}\biggl( \frac{3}{4\pi r^3}\int_{B_r(\bfx_0)} g(\bfx)\; \rmd\bfx \biggr)\ 
\leq\ \frac{3}{4\pi r^3} \int_{B_r(\bfx_0)} \Phi_{\gamma}(g)(\bfx)\; \rmd\bfx.
\]
Since $\cF_{\gamma}(f)\in L^{3/2}(\R^3)$, if we denote 
\begin{equation}
c_1\ :=\ \frac{3}{4\pi} \int_{\R^3} \Phi_{\gamma}(g)(\bfx)\; \rmd\bfx\ <\ \infty,
\label{1.14}
\end{equation}
then 
\[
\Phi_{\gamma}\biggl( \frac{3}{4\pi r^3}\int_{B_r(\bfx_0)}g(\bfx)\; 
\rmd\bfx \biggr)\ \leq\  r^{-3} c_1.
\]
Since $\Phi_{\gamma}$ is bijective and its inverse function 
$\Phi_{\gamma}^{-1}$ is increasing, we have
\begin{equation}
\int_{B_r(\bfx_0)} g(\bfx)\; \rmd\bfx\
\le\ \frac{4\pi r^3}{3}\ \Phi_{\gamma}^{-1}(r^{-3} c_1).
\label{1.15}
\end{equation}
If we denote 
\begin{equation}
A_{\gamma}(r)\ :=\ \Phi_{\gamma}^{-1}(r^{-3}c_1),
\label{1.16}
\end{equation}
then combining \eqref{1.12}, \eqref{1.15}, and \eqref{1.16} we obtain 
\begin{align*}
J_2(\bfx_0,r)\ 
&\le\ \frac{4\pi}{3}\ (\ln r^{-1})^{1+\gamma}\ r^2 A_{\gamma}(r) \\
&=\ \frac{4\pi}{3}\ \Bigl( \frac{\ln r^{-1}}
{\ln{}(1+A_{\gamma}(r))} \Bigr)^{1+\gamma}\ r^2 A_{\gamma}(r)\, 
[\ln{}(1+A_{\gamma}(r))]^{1+\gamma}.
\end{align*}
By the definition \eqref{1.13} we have 
\[
\cF_{\gamma}^{3/2}(A_{\gamma}(r))\ =\ \Phi_{\gamma}(A(r))\ =\ r^{-3} c_1.
\]
Hence $r^2\, \cF_{\gamma}(A_{\gamma}) = c_1^{2/3}$, which is the same as 
\[
r^2 A_{\gamma}(r)\,
[\ln{}(1+A_{\gamma}(r))]^{1+\gamma}\ =\ c_1^{2/3}
\]
due to the definition \eqref{1.6}.
Thus, we get 
\begin{equation}
J_2(\bfx_0,r)\ 
\le\ \frac{4\pi}{3}\ c_1^{2/3}\ \Bigl( \frac{\ln r^{-1}}
{\ln{}(1+A_{\gamma}(r))} \Bigr)^{1+\gamma}.
\label{1.17}
\end{equation}
Notice that 
\[
\frac{\ln r^{-1}}{\ln{}(1+A_{\gamma}(r))}\ <\ 1
\]
is equivalent to $r^{-1}-1 < A_{\gamma}(r)$, which is also equivalent to 
\[
r^3\, \Phi_{\gamma}(r^{-1}-1)\ <\ r^3\, \Phi_{\gamma}(A_{\gamma}(r))= c_1
\]
where we have used the fact that $\Phi_{\gamma}$ is increasing and \eqref{1.16}.
By definition we have
\[
r^3\ \Phi_{\gamma}(r^{-1}-1)=\bigl(r^2\, \cF_{\gamma}(r^{-1}-1)\bigr)^{3/2}.
\]
After some simple computations, we see that 
\[
\lim_{r\to 0+}\ r^2\, \cF_{\gamma}(r^{-1}-1)\ =\ \lim_{r\to 0+}\
r(1-r)\ (\ln r^{-1})^{1+\gamma}\ =\ 0.
\]
Thus, there exists a positive number $R_\gamma < \rho_\gamma$ such that 
\[
\sup_{0<r<R_\gamma} r^3\, \Phi_{\gamma}(r^{-1}-1)\ <\ c_1.
\]
Equivalently, we have 
\begin{equation}
\sup_{0<r<R_\gamma} \frac{\ln r^{-1}}{\ln{}(1+A_{\gamma}(r))}\ <\ 1.
\label{1.18}
\end{equation}
Therefore combining \eqref{1.17} and \eqref{1.18} we get 
\begin{equation}
\sup_{0<r<R_\gamma} \sup_{\bfx_0\in\R^3} J_2(\bfx_0,r)\ <\ \frac{4\pi}{3}\ c_1^{2/3}.
\label{1.19}
\end{equation}
From \eqref{1.11} and \eqref{1.19} we get the desired result \eqref{1.10}.
\end{proof}

\begin{rem}
\label{R1}
An upper bound of \eqref{1.10} can be taken as the sum of 
the bounds in \eqref{1.11} and \eqref{1.19}, which depends only 
on the $L^{3/2}(\R^3)$ norm of $\cF_{\gamma}(f)$.
\end{rem}

\begin{lemma} \label{L1.2}
Let $\gamma > 0$ and $f$ a nonnegative measurable function in $\R^3$.
If there exists $R \in (0,1]$ such that
\begin{equation}
\sup_{0<r<R} \sup_{\bfx_0\in\R^3} \left(\frac{(\ln r^{-1})^{1+\gamma}}{r} 
\int_{B_r(\bfx_0)} f(\bfx)\; \rmd\bfx\right)\ < \infty, \label{1.20}
\end{equation}
then 
\[
\lim_{r\to 0+}\ \sup_{\bfx_0\br\in\br\R^3}\ \int_{B_r(\bfx_0)}
\frac{f(\bfx)}{|\bfx-\bfx_0|}\; \rmd\bfx = 0.
\]
\end{lemma}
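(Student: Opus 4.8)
The plan is to split the ball $B_r(\bfx_0)$ into dyadic annuli centered at $\bfx_0$ and to apply the uniform mass bound \eqref{1.20} on each shell, so that the logarithmic gain is what forces the decay. First I would extract from \eqref{1.20} a finite constant $C$, independent of $\bfx_0$, such that
\[
\int_{B_\rho(\bfx_0)} f(\bfx)\; \rmd\bfx\ \le\ C\,\frac{\rho}{(\ln \rho^{-1})^{1+\gamma}}
\qquad \text{for all } 0<\rho<R \text{ and all } \bfx_0\in\R^3 .
\]
Fixing $r\in(0,R)$, I would set $A_k := \{\bfx : 2^{-k-1}r \le |\bfx-\bfx_0| < 2^{-k}r\}$ for $k\ge 0$, so that $B_r(\bfx_0)=\bigcup_{k\ge0}A_k$ up to the null set $\{\bfx_0\}$, with $A_k\subset B_{2^{-k}r}(\bfx_0)$ and $|\bfx-\bfx_0|^{-1}\le 2^{k+1}/r$ on $A_k$.

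Next I would estimate the integral over each shell. Combining the pointwise bound for $|\bfx-\bfx_0|^{-1}$ with the mass bound applied at radius $\rho = 2^{-k}r$ (legitimate since $2^{-k}r\le r<R$) gives
\[
\int_{A_k}\frac{f(\bfx)}{|\bfx-\bfx_0|}\; \rmd\bfx\ \le\ \frac{2^{k+1}}{r}\int_{B_{2^{-k}r}(\bfx_0)}f\; \rmd\bfx\ \le\ \frac{2C}{\bigl(k\ln 2+\ln r^{-1}\bigr)^{1+\gamma}},
\]
where I used $\ln\bigl((2^{-k}r)^{-1}\bigr) = k\ln 2 + \ln r^{-1}$; the point is that the factors $2^{\pm k}$ and $r^{\pm1}$ cancel cleanly, leaving a bound that no longer depends on $\bfx_0$.

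Summing over $k$ and comparing the resulting series with an integral, with $L:=\ln r^{-1}$, I would obtain
\[
\int_{B_r(\bfx_0)}\frac{f(\bfx)}{|\bfx-\bfx_0|}\; \rmd\bfx\ \le\ 2C\sum_{k=0}^{\infty}\frac{1}{(k\ln2+L)^{1+\gamma}}\ \le\ 2C\Bigl(L^{-1-\gamma}+\tfrac{1}{\gamma\ln2}\,L^{-\gamma}\Bigr).
\]
Since the right-hand side is independent of $\bfx_0$ and tends to $0$ as $r\to0+$ (because $L=\ln r^{-1}\to\infty$), taking the supremum over $\bfx_0$ and then the limit yields the claim.

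The one point that needs care—and the place where the hypothesis on $\cF_{\gamma}$ pays off—is the convergence and decay of the series $\sum_{k\ge0}(k\ln2+L)^{-1-\gamma}$. This is exactly where $\gamma>0$ is indispensable: for $\gamma=0$ the integral comparison produces $\int_L^\infty u^{-1}\,\rmd u$, which diverges, so neither summability of the shell contributions nor their vanishing as $L\to\infty$ would survive. With $\gamma>0$ the integral test gives the stated $O(L^{-\gamma})$ decay, and uniformity in $\bfx_0$ is automatic because every estimate above uses only the $\bfx_0$-independent constant $C$.
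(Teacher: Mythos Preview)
Your argument is correct. The dyadic shell bound, the cancellation of the $2^{\pm k}$ and $r^{\pm1}$ factors, and the integral comparison for $\sum_{k\ge0}(k\ln2+L)^{-1-\gamma}$ are all fine, and the resulting estimate $O\bigl((\ln r^{-1})^{-\gamma}\bigr)$ is uniform in $\bfx_0$ just as you say.

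The paper reaches the same conclusion by a different but closely related device: instead of a dyadic decomposition it writes, with $\rmd\mu=f\,\rmd\bfx$,
\[
\int_{B_r(\bfx_0)}\frac{f(\bfx)}{|\bfx-\bfx_0|}\,\rmd\bfx
=\int_0^{\infty}\mu\bigl(B_r(\bfx_0)\cap B_{\zeta}(\bfx_0)\bigr)\,\frac{\rmd\zeta}{\zeta^{2}},
\]
via the layer-cake formula, and then splits the $\zeta$-integral at $\zeta=r$. The piece $\zeta\ge r$ gives exactly $r^{-1}\mu(B_r(\bfx_0))\le c\,(\ln r^{-1})^{-1-\gamma}$, while for $\zeta<r$ the mass bound turns the integrand into $c\,\zeta^{-1}(\ln\zeta^{-1})^{-1-\gamma}$, whose primitive is $\gamma^{-1}(\ln\zeta^{-1})^{-\gamma}$. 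Your dyadic sum is the discrete analogue of this last integral, and your $k=0$ term plays the role of the paper's $\zeta\ge r$ piece. Both routes are standard; the layer-cake version makes the continuous structure (and the exact constant $\gamma^{-1}$) slightly more transparent, while your shell decomposition is perhaps more elementary since it avoids invoking Fubini and distribution functions.
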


\begin{proof}
Define the measure $\rmd\mu=f(\bfx)\, \rmd\bfx$ on $\R^3$.
Applying Fubini's theorem we get 
\[
\int_{B_r(\bfx_0)} \frac{f(\bfx)}{|\bfx-\bfx_0|}\; \rmd\bfx\
=\ \int_{B_r(\bfx_0)} \frac{\rmd\mu}{|\bfx-\bfx_0|}\ 
=\ \int_0^{\infty} \mu \bigl\{ \bfx\in B_r(\bfx_0);\
|\bfx-\bfx_0|^{-1}>\xi \bigr\}\; \rmd\xi.
\]
Replacing $\xi=\zeta^{-1}$ and changing variables we have 
\[
\ \int_0^{\infty} \mu \bigl\{ \bfx\in B_r(\bfx_0);\
|\bfx-\bfx_0|^{-1}>\xi \bigr\}\; \rmd\xi\ 
=\ \int_0^{\infty} \mu \bigl\{ \bfx\in B_r(\bfx_0);\
|\bfx-\bfx_0|<\zeta \bigr\}\; \frac{\rmd\zeta}{\zeta^2}.
\]
Splitting the last integral into two parts we define 
\begin{align*}
D_1(\bfx_0,r)\ &:=\ \int_0^r \mu \bigl\{ \bfx\in B_r(\bfx_0);\ \, 
|\bfx-\bfx_0|<\zeta \bigr\}\; \frac{\rmd\zeta}{\zeta^2}, \\
D_2(\bfx_0,r)\ &:=\ \int_r^{\infty} \mu \bigl\{ \bfx\in B_r(\bfx_0);\ \, 
|\bfx-\bfx_0|<\zeta \bigr\}\; \frac{\rmd\zeta}{\zeta^2}.
\end{align*}
Let $c$ denote the supremum in \eqref{1.20}.
Then we use the condition \eqref{1.20} to obtain 
\[
D_1(\bfx_0,r)\ =\ \int_0^r \mu\bigl(B_{\zeta}(\bfx_0)\bigr)\; 
\frac{\rmd\zeta}{\zeta^2}\
=\ \int_0^r \biggl(\int_{B_{\zeta}(\bfx_0)} f(\bfx)\;
\rmd\bfx\biggr)\; \frac{\rmd\zeta}{\zeta^2}\
\le\ c \int_0^r \frac{\zeta}{(\ln\zeta^{-1})^{1+\gamma}}\;
\frac{\rmd\zeta}{\zeta^2}.
\]
Changing variables we have 
\[
\int_0^r \frac{\zeta}{(\ln\zeta^{-1})^{1+\gamma}}\; \frac{\rmd\zeta}{\zeta^2}\
=\ \int_0^r\frac{1}{\zeta\, (-\ln\zeta)^{1+\gamma}}\; \rmd\zeta\ 
=\ \int_{-\ln r}^{\infty}\frac{\rmd\eta}{\eta^{1+\gamma}}\ 
=\ \frac{1}{\gamma\, (\ln r^{-1})^{\gamma}}.
\]
Thus, we get 
\begin{equation}
\lim_{r\to 0+}\ \sup_{\bfx_0\br\in\br\R^3}\ D_1(\bfx_0,r)\ 
\le\ c \lim_{r\to 0+}\ \frac{1}{\gamma\, (\ln r^{-1})^{\gamma}}\ =\ 0.
\label{1.21}
\end{equation}
Similarly, we use the condition \eqref{1.20} to obtain 
\[
D_2(\bfx_0,r)\ =\ \int_r^{\infty} \mu\bigl( B_r(\bfx_0) \bigr)\; 
\frac{\rmd\zeta}{\zeta^2}\
=\ \frac{1}{r} \int_{B_r(\bfx_0)} f(\bfx)\; \rmd\bfx\
\le\ c \frac{1}{(\ln r^{-1})^{1+\gamma}}.
\]
Thus, we get 
\begin{equation}
\lim_{r\to 0+}\ \sup_{\bfx_0\br\in\br\R^3}\ D_2(\bfx_0,r)\ 
\le\ c \lim_{r\to 0+}\ \frac{1}{(\ln r^{-1})^{1+\gamma}}\ =\ 0.
\label{1.22}
\end{equation}
From \eqref{1.21} and \eqref{1.22} we get the desired result.
\end{proof}

\begin{corollary} \label{C1}
Let $\gamma > 0$ and $f$ a nonnegative measurable function 
in $\R^3\times\cT$, where $\cT\subset\R$. 
If 
\[
\sup_{t\in\cT} \|\cF_{\gamma}(f(\, .\, ,t))\|_{3/2} < \infty,
\]
then
\[
\lim_{r\to 0+}\ \sup_{t\in\cT} \sup_{\bfx_0\br\in\br\R^3}\ 
\int_{B_r(\bfx_0)}
\frac{f(\bfx,t)}{|\bfx-\bfx_0|}\; \rmd\bfx\ =\ 0.
\]
\end{corollary}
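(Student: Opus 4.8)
The plan is to apply Lemma~\ref{L1.1} and Lemma~\ref{L1.2} to each time slice $f_t := f(\,.\,,t)$ and to upgrade the resulting pointwise-in-$t$ estimates to estimates that are uniform in $t\in\cT$. Set $K := \sup_{t\in\cT}\|\cF_{\gamma}(f(\,.\,,t))\|_{3/2}$, which is finite by hypothesis. For every fixed $t$ we have $\cF_{\gamma}(f_t)\in L^{3/2}(\R^3)$, so Lemma~\ref{L1.1} produces a radius and a bound for the quantity in \eqref{1.10}; the whole point is that, by Remark~\ref{R1}, both of these depend on $t$ only through $\|\cF_{\gamma}(f_t)\|_{3/2}\le K$, and hence can be chosen independently of $t$.

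First I would make the uniformity in Lemma~\ref{L1.1} explicit by rerunning its proof with $K$ in place of the slice-dependent data. The splitting radius $\rho_{\gamma}$ controlling $J_1$ in \eqref{1.11} depends only on $\gamma$. For the main term $J_2$, the constant $c_1$ from \eqref{1.14} satisfies $c_1(t)\le\frac{3}{4\pi}\|\cF_{\gamma}(f_t)\|_{3/2}^{3/2}\le\frac{3}{4\pi}K^{3/2}=:\bar c_1$, uniformly in $t$. Since $A_{\gamma}=\Phi_{\gamma}^{-1}(r^{-3}c_1)$ is increasing in $c_1$, replacing $c_1(t)$ by the larger value $\bar c_1$ only enlarges the bound \eqref{1.15}, so with $\bar A_{\gamma}(r):=\Phi_{\gamma}^{-1}(r^{-3}\bar c_1)$ the estimate \eqref{1.17} becomes $J_2(\bfx_0,r)\le\frac{4\pi}{3}\,\bar c_1^{2/3}\,\bigl(\ln r^{-1}/\ln(1+\bar A_{\gamma}(r))\bigr)^{1+\gamma}$ for all $t$ and all $\bfx_0$. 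Exactly as in \eqref{1.18}, the inequality $\ln r^{-1}/\ln(1+\bar A_{\gamma}(r))<1$ is equivalent to $r^3\Phi_{\gamma}(r^{-1}-1)<\bar c_1$, and since $\lim_{r\to0+}r^3\Phi_{\gamma}(r^{-1}-1)=0$ there is a radius $\bar R_{\gamma}\in(0,1]$, depending only on $\gamma$ and $\bar c_1$ (hence only on $\gamma$ and $K$), for which this holds on $(0,\bar R_{\gamma})$. Combining with the bound on $J_1$ I obtain a constant $c_*<\infty$, depending only on $\gamma$ and $K$, such that
\[
\sup_{0<r<\bar R_{\gamma}}\ \sup_{t\in\cT}\ \sup_{\bfx_0\in\R^3}\ \frac{(\ln r^{-1})^{1+\gamma}}{r}\int_{B_r(\bfx_0)}f(\bfx,t)\;\rmd\bfx\ =\ c_*\ <\ \infty .
\]
This is precisely the uniform-in-$t$ form of hypothesis \eqref{1.20}.

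Finally I would feed this into the proof of Lemma~\ref{L1.2}. For each fixed $t$, that proof splits the integral as $D_1(\bfx_0,r)+D_2(\bfx_0,r)$ and, using \eqref{1.20} with the single constant $c_*$ and radius $\bar R_{\gamma}$, bounds them by $c_*/(\gamma(\ln r^{-1})^{\gamma})$ and $c_*/(\ln r^{-1})^{1+\gamma}$ respectively for all $0<r<\bar R_{\gamma}$; see \eqref{1.21}--\eqref{1.22}. Because these bounds involve only $c_*$, $\gamma$ and $r$, they are uniform in $t$ and $\bfx_0$, whence
\[
\sup_{t\in\cT}\ \sup_{\bfx_0\in\R^3}\ \int_{B_r(\bfx_0)}\frac{f(\bfx,t)}{|\bfx-\bfx_0|}\;\rmd\bfx\ \le\ c_*\Bigl(\frac{1}{\gamma\,(\ln r^{-1})^{\gamma}}+\frac{1}{(\ln r^{-1})^{1+\gamma}}\Bigr)
\]
for $0<r<\bar R_{\gamma}$, and letting $r\to0+$ gives the claim. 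The only genuine obstacle is the bookkeeping in the middle step: one must verify that every $t$-dependent quantity entering Lemma~\ref{L1.1} is either independent of the slice (like $\rho_{\gamma}$) or dominated by the uniform bound $K$, so that a single radius $\bar R_{\gamma}$ and constant $c_*$ serve all slices simultaneously. The monotonicity of $A_{\gamma}$ in $c_1$ is exactly what lets one pass from the slice-dependent $c_1(t)$ to the uniform $\bar c_1$ without reversing any inequality; everything else is a verbatim repetition of the two lemmas with fixed constants.
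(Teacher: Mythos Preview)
Your proposal is correct and follows essentially the same approach as the paper, which simply invokes Remark~\ref{R1} to assert that the bound in \eqref{1.10} depends only on $\|\cF_{\gamma}(f(\,.\,,t))\|_{3/2}$ and then applies Lemmas~\ref{L1.1} and~\ref{L1.2}. You have carefully unpacked the uniformity claim that the paper leaves implicit---in particular, your observation that replacing the slice-dependent $c_1(t)$ by the larger $\bar c_1$ preserves inequality \eqref{1.15} (by monotonicity of $\Phi_{\gamma}^{-1}$) and simultaneously yields a single, $t$-independent radius $\bar R_{\gamma}$ is exactly the content the paper is gesturing at with Remark~\ref{R1}.
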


\begin{proof}
As Remark \ref{R1} an upper bound in \eqref{1.10} may depends 
only on the norm $\|\cF_{\gamma}(f(\, .\, ,t))\|_{3/2}$, which is 
uniformly bounded in $t \in \cT$.
Thus, we conclude the desired result from Lemma \ref{L1.1} and Lemma \ref{L1.2}.
\end{proof}


\section{The proof of Theorem \ref{T1} under condition a).} 
\label{S2}

In this section, we suppose that $(\bfu,\bfb,p)$ is a suitable
weak solution to the MHD initial-value problem
(\ref{1.1})--(\ref{1.4}) in $Q_T$, satisfying condition a) of
Theorem \ref{T1}. We will prove that $(\bfu,\bfb,p)$ has no
singular points in $\R^3\times(0,T_0]$. Assume, by
contradiction, that $\cS_{(0,T_0]}\not=\emptyset$. Due to the
assumption that $\bfu_0,\bfb_0\in \Ws^{1,2}(\R^3)$, there
exists $T_1\in(0,T_0)$ such that $\cS_{(0,T_1)}=\emptyset$.
Thus, the first time instant, when a singular point appears is
a point from $[T_1,T_0]$. Let us denote this time instant by
$t_0$. In accordance with the terminology from \cite{Ga1}, we
may call it \emph{epoch of irregularity}. (Recall that,
generally, $t_0\in(0,T)$ is said to be an epoch of irregularity
of the solution $(\bfu,\bfb,p)$ if there exists $\delta>0$ such
that $\cS(t)=\emptyset$ for all $t\in(t_0-\delta,t_0)$ and
$\cS(t_0)\not=\emptyset$.)

\subsection{More on formula (\ref{1.5}).} \label{SS2.1}
Obviously, the right hand side of formula (\ref{1.5}) has a
sense at every point $(\bfx,t)\in\R^3\times(0,T)$ such that
$\cS(t)=\emptyset$. Let us show that it also has a sense at
all regular points $(\bfx,t)$ of the solution $(\bfu,\bfb,p)$
which lie on the time level $t$ such that
$\cS(t)\not=\emptyset$. Thus, let $d>0$ and $\bfx$ be a point
in $\R^3$ whose distance from $\cS(t)$ is greater than or
equal to $2d$. Splitting the integral on the right hand side of
(\ref{1.5}) to the sum of the integral over $B_{d}(\bfx)$ and
the integral over $\R^3\smallsetminus B_{d}(\bfx)$ and applying
twice the integration by parts to the integral over
$\R^3\smallsetminus B_{d}(\bfx)$, we obtain
\begin{align*}
&\int_{\R^3} \frac{1}{|\bfx-\bfy|}\ 
\divg\divg\bigl[\bfu(\bfy,t)\otimes\bfu(\bfy,t)-\bfb(\bfy,t)\otimes
\bfb(\bfy,t)\bigr]\; \rmd\bfy \\
&=\ \int_{\R^3} \frac{1}{|\bfx-\bfy|}\ \frac{\partial^2}{\partial
y_i\, \partial y_j}\br\bigl[ u_i(\bfy,t)\br
u_j(\bfy,t)-b_i(\bfy,t)\br b_j(\bfy,t) \bigr]\; \rmd\bfy
\\ \noalign{\vskip 6pt}
&=\ I_d^{(1)}(\bfx,t)+ I_d^{(2)}(\bfx,t),
\end{align*}
where
\begin{align}
I_d^{(1)}(\bfx,t)\ 
&=\ \int_{B_{d}(\bfx)} \frac{1}{|\bfx-\bfy|}\
\frac{\partial^2}{\partial y_i\, \partial y_j}\br\bigl[
u_i(\bfy,t)\br u_j(\bfy,t)-b_i(\bfy,t)\br
b_j(\bfy,t) \bigr]\; \rmd\bfy \nonumber \\
&\quad +\int_{S_{d}(\bfx)}
\frac{n_i^{\bfx}}{|\bfx-\bfy|}\ \frac{\partial}{\partial
y_j}\br\bigl[ u_i(\bfy,t)\br
u_j(\bfy,t)-b_i(\bfy,t)\br b_j(\bfy,t) \bigr]\; \rmd_{\bfy}S \nonumber \\
&\quad -\int_{S_{d}(\bfx)} \frac{\partial}{\partial
y_i}\br\Bigl( \frac{1}{|\bfx-\bfy|} \Bigr)\, n_j^{\bfx}\, \bigl[
u_i(\bfy,t)\br u_j(\bfy,t)-b_i(\bfy,t)\br b_j(\bfy,t) \bigr]\;
\rmd_{\bfy}S, \label{2.1} 
\end{align}
and 
\begin{align}
I_d^{(2)}(\bfx,t)\ &=\ \int_{\R^3\smallsetminus B_{d}(\bfx)}
\frac{\partial^2}{\partial y_i\, \partial y_j}\,
\Bigl(\frac{1}{|\bfy-\bfx|}\Bigr)\ \bigl[ u_i(\bfy,t)\br
u_j(\bfy,t)-b_i(\bfy,t)\br b_j(\bfy,t) \bigr]\;
\rmd\bfy \nonumber \\
&=\ \int_{\R^3\smallsetminus B_{d}(\bfx)} \bbK(\bfy-\bfx):
\bigl[\bfu(\bfy,t)\otimes\bfu(\bfy,t)-\bfb(\bfy,t)\otimes
\bfb(\bfy,t)\bigr]\; \rmd\bfy. \label{2.2}
\end{align}
Here, $S_{d}(\bfx)$ is the sphere with center $\bfx$ and radius
$d$,
\begin{displaymath}
\bfn^{\bfx}(\bfy)\ \equiv\ \bigl( n_1^{\bfx}(\bfy),
n_2^{\bfx}(\bfy), n_3^{\bfx}(\bfy) \bigr)\ :=\
\frac{\bfx-\bfy}{|\bfx-\bfy|}\ =\ \frac{\bfx-\bfy}{d}
\end{displaymath}
and $\bbK(\bfy-\bfx):=\nabla_{\!\bfy}^2\br|\bfy-\bfx|^{-1}$ is the
second order tensor with the entries
\begin{displaymath}
k_{ij}(\bfy-\bfx) = \frac{\partial^2}{\partial y_i\,
\partial y_j}\, \Bigl(\frac{1}{|\bfy-\bfx|}\Bigr) =
-\frac{\partial}{\partial y_i}\, \frac{y_j-x_j}{|\bfy-\bfx|^3}\ =\
3\, \frac{(y_i-x_i)(y_j-x_j)}
{|\bfy-\bfx|^5}-\frac{\delta_{ij}}{|\bfy-\bfx|^3}
\end{displaymath}
for $i,j=1,2,3$. As all integrals in $I_d^{(1)}(\bfx,t)$ and
$I_d^{(2)}(\bfx,t)$ converge, (\ref{1.5}) makes sense. Thus, since
$d>0$ can be chosen arbitrarily small, the pressure is defined by
formula (\ref{1.5}) at every regular point of $(\bfu,\bfb,p)$.

\subsection{An estimate of $\bfu$ and $\bfb$ in the
neighborhood of infinity.} \label{SS2.2} If a suitable weak
solution $(\bfu,\bfb,p)$ is regular in
$\R^3\times(t_0-\delta,t_0)$, where $0\leq t_0-\delta<t_0\leq
T$, then
\begin{equation}
\lim_{R\to\infty}\ \sup_{t_0-\delta\leq t\leq t_0}\
\int_{\R^3\smallsetminus B_R(\bfzero)} \bigl( |\bfu(\bfx,t)|^2+
|\bfb(\bfx,t)|^2 \bigr)\; \rmd\bfx\ =\ 0. \label{2.3}
\end{equation}
The same formula is proven in \cite{SeSve} just for a suitable
weak solution $\bfu$ of the Navier--Stokes equations. (See formula
(4.6) in \cite{SeSve}.) The derivation uses the subtraction of the
generalized (i.e.~localized) energy equality from the energy
equality for solution $\bfu$, and on appropriate estimates of the
difference. The presence of function $\bfb$, as an additional
component of the solution, affects the whole procedure only
technically. The formula, in the complete form (\ref{2.3})
(i.e.~for a suitable weak solution $(\bfu,\bfb,p)$ for the MHD
equations), is also used in the proof of Theorem 1.3 in
\cite{KaLee3}.

\subsection{Important identities.} \label{SS2.3} Let $\bfx_0\in\R^3$,
$t\in(0,T)$, $R>0$ and $\alpha\in[0,1]$. By (\ref{1.5}), we have
\begin{align}
& \int_{B_R(\bfx_0)} |\bfx_0-\bfy|^{-\alpha}\, \Bigl[
p(\bfy,t)+\frac{1}{2}\, |\bfb(\bfy,t)|^2\Bigr]\; \rmd\bfy
\nonumber \\
&=\ \frac{1}{4\pi}\int_{\R^3}\divg\divg
\bigl[\bfu(\bfx,t)\otimes\bfu(\bfx,t)-\bfb(\bfx,t)\otimes
\bfb(\bfx,t)\bigr]\, \biggl( \int_{B_R(\bfx_0)}
\frac{|\bfx_0-\bfy|^{-\alpha}}{|\bfx-\bfy|}\; \rmd\bfy \biggr)\,
\rmd\bfx \nonumber \\
&=\ \frac{1}{4\pi}\int_{\R^3}
\bigl[\bfu(\bfx,t)\otimes\bfu(\bfx,t)-\bfb(\bfx,t)\otimes
\bfb(\bfx,t)\bigr]\ \nabla_{\bfx}^2\biggl( \int_{B_R(\bfx_0)}
\frac{|\bfx_0-\bfy|^{-\alpha}}{|\bfx-\bfy|}\; \rmd\bfy \biggr)\,
\rmd\bfx \label{2.4}
\end{align}
One can compute the integral to see that 
\begin{equation}
\nabla_{\bfx}^2 \int_{B_R(\bfx_0)}
\frac{|\bfx_0-\bfy|^{-\alpha}}{|\bfx-\bfy|}\; \rmd\bfy\ =\
\frac{4\pi\, |\bfx-\bfx_0|^{-\alpha}}{3-\alpha}\, \biggl(-\bbI+
\alpha\, \frac{(\bfx-\bfx_0)\otimes(\bfx-
\bfx_0)}{|\bfx-\bfx_0|^2} \biggr) \label{2.5}
\end{equation}
for $|\bfx-\bfx_0|\leq R$ and
\begin{equation}
\nabla_{\bfx}^2 \int_{B_R(\bfx_0)}
\frac{|\bfx_0-\bfy|^{-\alpha}}{|\bfx-\bfy|}\; \rmd\bfy\ =\
\frac{4\pi\, R^{3-\alpha}}{3-\alpha}\, |\bfx-\bfx_0|^{-3}\,
\biggl(-\bbI+3\, \frac{(\bfx-\bfx_0)\otimes(\bfx-\bfx_0)}
{|\bfx-\bfx_0|^2} \biggr) \label{2.6}
\end{equation}
for $|\bfx-\bfx_0|>R$. As the derivation of (\ref{2.5}) and
(\ref{2.6}) is quite technical, we provide its details in
Appendix. Substituting formulas (\ref{2.5}) and (\ref{2.6}) to
(\ref{2.4}), we obtain
\begin{align}
&\int_{B_R(\bfx_0)} \frac{1}{|\bfx-\bfx_0|^{\alpha}}\; \Bigl(
p+\frac{1}{2}\, |\bfb|^2 \Bigr)\; \rmd\bfx \nonumber \\
&=\ \frac{1}{3-\alpha}\int_{B_R(\bfx_0)}\frac{1}
{|\bfx-\bfx_0|^{\alpha}}\, \bigl( -|\bfu|^2+|\bfb|^2+\alpha\,
|\bfu_{\rm r}^{\bfx_0}|^2-\alpha\, |\bfb_{\rm
r}^{\bfx_0}|^2\bigr)\;
\rmd\bfx \nonumber \\
&\quad +\frac{1}{3-\alpha}\int_{\R^3\smallsetminus
B_R(\bfx_0)}\frac{R^{3-\alpha}}{|\bfx-\bfx_0|^3}\; \bigl(
2\br|\bfu_{\rm r}^{\bfx_0}|^2-|\bfu_{\rm p}^{(\bfx_0}|^2-
2\br|\bfb_{\rm r}^{\bfx_0}|^2+|\bfb_{\rm p}^{\bfx_0}|^2 \bigr)\;
\rmd\bfx, \label{2.7}
\end{align}
where
\begin{align*}
& \bfur(\bfx,t):=\Bigl( \frac{\bfu(\bfx,t)\cdot(\bfx-\bfx_0)}
{|\bfx-\bfx_0|} \Bigr)\, \frac{\bfx-\bfx_0}{|\bfx-\bfx_0|}, &&
\bfup(\bfx,t) :=\bfu(\bfx,t)-\bfur(\bfx,t), \\ \noalign{\vskip
2pt}
& \bfbr(\bfx,t):=\Bigl( \frac{\bfb(\bfx,t)\cdot(\bfx-\bfx_0)}
{|\bfx-\bfx_0|} \Bigr)\, \frac{\bfx-\bfx_0}{|\bfx-\bfx_0|}, &&
\bfbp(\bfx,t) :=\bfb(\bfx,t)-\bfbr(\bfx,t).
\end{align*}
Note that $\bfur(\bfx,t)$ is the orthogonal projection of
$\bfu(\bfx,t)$ to the ``radial'' direction $\bfx-\bfx_0$ (radial
in the coordinate system centered at the point $\bfx_0$) and
$\bfup(\bfx,t)$ is the orthogonal projection of $\bfu(\bfx,t)$ to
the plane perpendicular to $\bfx-\bfx_0$. The same explanation
also holds for $\bfbr(\bfx,t)$ and $\bfbp(\bfx,t)$. Equality
(\ref{2.7}) yields
\begin{align*}
&\frac{1}{R}\int_{B_R(\bfx_0)} 
\frac{R^{\alpha}}{|\bfx-\bfx_0|^{\alpha}}\, \Bigl[ (3-\alpha)\,
p+|\bfup|^2+(1-\alpha)\, |\bfur|^2+ \frac{1-\alpha}{2}\, |\bfbp|^2 
+\frac{1+\alpha}{2}\, |\bfbr|^2 \Bigr]\; \rmd\bfx \\
&=\ \int_{\R^3\smallsetminus B_R(\bfx_0)}\frac{R^2}
{|\bfx-\bfx_0|^3}\; \bigl( 2\, |\bfur|^2- |\bfup|^2-2\, |\bfbr|^2+
|\bfbp|^2 \bigr)\; \rmd\bfx. 
\end{align*}
Particularly, choosing $\alpha=1$ and $\alpha=0$, we get
\begin{align}
&\int_{B_R(\bfx_0)} \frac{1}{|\bfx-\bfx_0|}\ \bigl(2p+|\bfup|^2+
|\bfbr|^2\bigr)\; \rmd\bfx \nonumber \\
\noalign{\vskip 2pt}
&=\ \int_{B_R(\bfx_0)}\frac{1}{R}\ \Bigl(3p
+|\bfu|^2+\frac{1}{2}\, |\bfb|^2\Bigr)\; \rmd\bfx \nonumber \\
\noalign{\vskip 2pt}
&=\ \int_{\R^3\smallsetminus B_R(\bfx_0)}
\frac{R^2}{|\bfx-\bfx_0|^3}\, \bigl[\br 2\br|\bfur|^2-|\bfup|^2-
2\br|\bfbr|^2+|\bfbp|^2 \br\bigr]\; \rmd\bfx.
\label{2.8}
\end{align}

\subsection{The continuity of $\bfu$ and $\bfb$ from $(0,t_0]$
to $\bfL^2(\R^3)$.} \label{SS2.4} As the solution
$(\bfu,\bfb,p)$ has no singular points in $\R^3\times(0,t_0)$,
the norms $\|\bfu(\, .\, ,t)\|_2$ and $\|\bfb(\, .\, ,t)\|_2$
depend continuously on $t$ for $t\in(0,t_0)$. Our next aim in
this subsection is to prove that
\begin{equation}
\lim_{t\to t_0-}\ \int_{\R^3} \bigl(|\bfu(\bfx,t)-
\bfu(\bfx,t_0)|^2+|\bfb(\bfx,t)-\bfb(\bfx,t_0)|^2\bigr)\;
\rmd\bfx\ =\ 0. \label{2.9}
\end{equation}
We shall use the next lemma:

\begin{lemma} \label{L2.1}
If $\Omega_0\subset\R^3$ is a bounded domain and 
$0 < \delta < t_0 < T$, then the following implications hold:
\begin{align}
\sup_{R>0,\ \bfx_0\in\Omega_0}\ \frac{1}{R}\ \
{\displaystyle\mathrel{\esssup_{t_0-\delta<t<t_0}}}\ 
\|\bfu(\, .\, ,t)\|_{2;\, B_R(\bfx_0)}^2 < \infty\ 
&\implies\ \lim_{t\to t_0-}\ \|\bfu(\, .\, ,t)-\bfu(\, .\,
,t_0)\|_{2;\, \Omega_0} = 0, \label{2.10} \\
\sup_{R>0\ \bfx_0\in\Omega_0}\ \frac{1}{R}\ \
{\displaystyle\mathrel{\esssup_{t_0-\delta<t<t_0}}}\ 
\|\bfb(\, .\, ,t)\|_{2;\, B_R(\bfx_0)}^2 < \infty\ 
&\implies\ \lim_{t\to t_0-}\ \|\bfb(\, .\, ,t)-\bfb(\, .\,
,t_0)\|_{2;\, \Omega_0} = 0. \label{2.11}
\end{align}
\end{lemma}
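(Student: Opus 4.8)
The plan is to establish a uniform local $L^2$ boundedness of $\bfu$ on approach to $t_0$, and then upgrade weak $L^2$-continuity (which we already have from the weak continuity into $\Ls^2(\R^3)$) to strong $L^2$-continuity on the bounded domain $\Omega_0$. I would first observe that the hypothesis on the left-hand side of \eqref{2.10} says precisely that the local energy, normalized by the radius $R$, is uniformly bounded over all balls centered in $\Omega_0$ and over all $t\in(t_0-\delta,t_0)$. Since $\bfu$ is weakly continuous from $(0,T)$ into $\Ls^2(\R^3)$, we already know that $\bfu(\,.\,,t)\rightharpoonup\bfu(\,.\,,t_0)$ weakly in $\bfL^2(\R^3)$ as $t\to t_0-$. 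The standard identity
\[
\|\bfu(\,.\,,t)-\bfu(\,.\,,t_0)\|_{2;\,\Omega_0}^2
= \|\bfu(\,.\,,t)\|_{2;\,\Omega_0}^2
- 2\int_{\Omega_0}\bfu(\,.\,,t)\cdot\bfu(\,.\,,t_0)\;\rmd\bfx
+ \|\bfu(\,.\,,t_0)\|_{2;\,\Omega_0}^2
\]
reduces everything to showing $\limsup_{t\to t_0-}\|\bfu(\,.\,,t)\|_{2;\,\Omega_0}^2\le\|\bfu(\,.\,,t_0)\|_{2;\,\Omega_0}^2$, because the middle term converges to $2\|\bfu(\,.\,,t_0)\|_{2;\,\Omega_0}^2$ by weak convergence (here I use that $\bfu(\,.\,,t_0)\chi_{\Omega_0}\in\bfL^2(\R^3)$ is a legitimate test function).

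Next I would prove that this $\limsup$ inequality holds, i.e.\ that the local energy does not increase across the approach to $t_0$. The natural tool is the localized energy inequality supplied in the definition of a suitable weak solution. Fixing a cutoff $\psi$ that equals $1$ on $\Omega_0$ and is compactly supported in a slightly larger ball, one integrates the localized energy inequality over a time strip $(t,t_0)$ and uses the sign of the dissipation term together with the uniform local bound from the hypothesis to control the drift of the localized energy; letting the time window shrink gives upper semicontinuity of the local energy at $t_0$. This is where the uniform bound on $R^{-1}\|\bfu\|_{2;\,B_R(\bfx_0)}^2$ does its work: it guarantees that the cubic term $(|\bfu|^2+|\bfb|^2+2p)(\bfu\cdot\nabla\psi)$ and the coupling term in the localized energy inequality are integrable and controllable uniformly in $t$, so no energy can concentrate or escape as $t\to t_0-$.

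The argument for \eqref{2.11} is identical in structure, with $\bfb$ in place of $\bfu$ and the same localized energy inequality and weak continuity of $\bfb$ into $\Ls^2(\R^3)$; the magnetic field contributes symmetrically and no new phenomenon arises. The hard part will be the control of the pressure-type and coupling terms in the localized energy inequality: unlike the pure $L^2$ energy, the term containing $2p\,(\bfu\cdot\nabla\psi)$ is not obviously controlled by the hypothesis, which only bounds the velocity energy locally. I expect to handle this by invoking the representation of the pressure through formula \eqref{1.5} and the identities of subsection~\ref{SS2.3}, which express the localized pressure integrals in terms of quantities like $\|\bfu\|_{2;\,B_R}^2$ and far-field tails that are uniformly small by \eqref{2.3}; combined with the uniform local energy bound, this closes the estimate and yields the upper semicontinuity, hence strong convergence on $\Omega_0$.
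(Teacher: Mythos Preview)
The paper's proof is essentially a citation: after observing, via weak lower semicontinuity of the norm, that the essential supremum over $(t_0-\delta,t_0)$ coincides with the supremum over $(t_0-\delta,t_0]$, it invokes Lemma~3.2 of Seregin--\v{S}ver\'ak \cite{SeSve} (and its MHD analogue in \cite{KaLee3}) directly. So the substance of the argument is deferred to those references.

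Your reduction to proving $\limsup_{t\to t_0-}\|\bfu(\,.\,,t)\|_{2;\Omega_0}^2\le\|\bfu(\,.\,,t_0)\|_{2;\Omega_0}^2$ is correct, but the mechanism you propose---the localized energy inequality---delivers the wrong direction. Applied on the strip $(t,t_0)$ with a spatial cutoff $\phi$, and even granting that the cubic and pressure remainders vanish as $t\to t_0-$, the inequality reads $\int\phi\bigl(|\bfu(t_0)|^2+|\bfb(t_0)|^2\bigr)\le\int\phi\bigl(|\bfu(t)|^2+|\bfb(t)|^2\bigr)+o(1)$, which yields only $E(t_0)\le\liminf_{t\to t_0-}E(t)$. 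That is \emph{lower} semicontinuity---precisely what weak convergence already gives---not the upper semicontinuity you need. (Using that the solution is regular on $(0,t_0)$ to upgrade the inequality to an equality there still leaves you with only an inequality at $t_0$, in the same direction.) A second difficulty is that the localized energy inequality couples $\bfu$ and $\bfb$, while the hypothesis of \eqref{2.10} bounds only $\bfu$.

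The argument behind \cite{SeSve} Lemma~3.2 uses the Morrey bound differently: since $\cS(t_0)\cap\overline{\Omega_0}$ has zero $1$-dimensional Hausdorff measure, one covers it by finitely many balls $B_{r_i}(\bfx_i)$ with $\sum_i r_i$ arbitrarily small; on their union the Morrey hypothesis gives $\sum_i\|\bfu(\,.\,,t)\|_{2;B_{r_i}(\bfx_i)}^2\le M\sum_i r_i<\epsilon$ uniformly in $t$ (and at $t_0$ by weak lower semicontinuity), while on the complement in $\Omega_0$ the solution is regular up to $t_0$, hence strongly continuous. The Morrey bound thus rules out concentration at the singular set rather than controlling error terms in the energy inequality.
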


\begin{proof}
As the function $\bfu$ is weakly continuous from
$(t_0-\delta,t_0]$ to $\bfL^2(\R^3)$, it is also weakly continuous
from $(t_0-\delta,t_0]$ to $\bfL^2(B_R(\bfx_0))$. Hence, due to
the lower semi-continuity of the norm in $\bfL^2(B_R(\bfx_0))$, we
have
\begin{equation}
{\displaystyle\mathrel{\esssup_{t_0-\delta<t<t_0}}}\
\|\bfu(\, .\, ,t)\|_{2;\, B_R(\bfx_0)}^2\ =\
\sup_{t_0-\delta<t\leq t_0}\ \|\bfu(\, .\, ,t)\|_{2;\,
B_R(\bfx_0)}^2. \label{2.12}
\end{equation}
Then the implication (\ref{2.10}) follows from Lemma 3.2 in
\cite{SeSve}. Note that the authors of \cite{SeSve} prove an
analogous implication in their Lemma 3.2, considering the
supremum over $R>0$, $\bfx_0\in\Omega_0$ and $t\in(0,t_0]$ in
the premise. However, due to (\ref{2.12}), the suprema on the
left hand side of (\ref{2.10}) are equal to just one supremum
over $R>0$, $\bfx_0\in\Omega_0$ and $t\in(0,t_0]$, which means
that Lemma 3.2 from \cite{SeSve} can be applied.

The validity of the implication (\ref{2.11}) can be confirmed in
the same way.

Note that an analogue of Lemma 3.2 from paper \cite{SeSve}, which
deals just with the Navier--Stokes equations, can also be found in
paper \cite{KaLee3}, which concerns the MHD equations.
\end{proof}

In order to prove (\ref{2.9}), let us at first show that the
premises in the implications (\ref{2.10}) and (\ref{2.11}) in Lemma
\ref{L2.1} are satisfied.

Since $\cF_{\gamma}(p_-)\in L^{\infty}(0,T;\, L^{3/2}(\R^3))$,
there exists set $\cT\subset(0,t_0)$ of 1D Lebesgue measure
zero such that the norm $\|\cF_{\gamma}(p_-(\, .\,
,t))\|_{3/2}$ is uniformly bounded for
$t\in(0,t_0)\smallsetminus\cT$. Then, due to Corollary
\ref{C1}, there exists $R_0>0$ such that for all $t\in(0,t_0)\smallsetminus\cT$,
\[
\sup_{R\in(0,R_0)} \int_{B_R(\bfx_0)}\frac{p_-(\bfx,t)}{|\bfx-\bfx_0|}\ \leq 1.
\]

Let $\bfx_0\in\R^3$ and $R\in(0,R_0)$. It follows from the second
identity in (\ref{2.8}) that at each time
$t\in(0,t_0)\smallsetminus\cT$, we have
\begin{align*}
&\frac{1}{R} \int_{B_R(\bfx_0)} \Bigl( |\bfu|^2+\frac{1}{2}\,
|\bfb|^2\Bigr)\; \rmd\bfx \\
&\leq\ \frac{1}{R} \int_{B_R(\bfx_0)}
\Bigl( |\bfu|^2+\frac{1}{2}\, |\bfb|^2+3\br\bigl[
p+p_-\bigr] \Bigr)\; \rmd\bfx \\
&=\ \int_{B_R(\bfx_0)} \frac{1}{|\bfx-\bfx_0|}\ \bigl(2\br
p+|\bfup|^2+ |\bfbr|^2\bigr)\;
\rmd\bfx+\frac{3}{R}\int_{B_R(\bfx_0)}p_-\; \rmd\bfx \\
&=\ \int_{B_R(\bfx_0)} \frac{1}{|\bfx-\bfx_0|}\ \bigl(2\br
p_++|\bfup|^2+ |\bfbr|^2\bigr)\; \rmd\bfx+
\frac{3}{R}\int_{B_R(\bfx_0)}p_-\; \rmd\bfx 
-\int_{B_R(\bfx_0)}\frac{2\br p_-}{|\bfx-\bfx_0|}\; \rmd\bfx,
\end{align*}
which is bounded by 
\begin{align*}
&\int_{B_{R_0}(\bfx_0)} \frac{1}{|\bfx-\bfx_0|}\ \bigl(2\br
p+|\bfup|^2+ |\bfbr|^2\bigr)\; \rmd\bfx+
\frac{3}{R}\int_{B_R(\bfx_0)}p_-\; \rmd\bfx 
+ \int_{B_{R_0}(\bfx_0)\smallsetminus
B_R(\bfx_0)}\frac{2\br p_-}{|\bfx-\bfx_0|}\; \rmd\bfx \\
&\leq\ \int_{\R^3\smallsetminus B_{R_0}(\bfx_0)}
\frac{R^2}{|\bfx-\bfx_0|^3}\, \bigl[\br 2\br|\bfu_{\rm
r}|^2-|\bfu_{\rm p}|^2- 2\br|\bfb_{\rm r}|^2-|\bfb_{\rm p}|^2
\br\bigr]\; \rmd\bfx \nonumber \\
&\quad +\frac{3}{R}\int_{B_R(\bfx_0)}p_-\;
\rmd\bfx+\int_{B_{R_0}(\bfx_0)\smallsetminus
B_R(\bfx_0)}\frac{2\br p_-}{|\bfx-\bfx_0|}\; \rmd\bfx
\nonumber \\
&\leq\ \frac{c}{R_0}\int_{R^3\smallsetminus B_{R_0}(\bfx_0)}
\bigl( |\bfu|^2+|\bfb|^2\bigr)\; \rmd\bfx+
\int_{B_{R_0}(\bfx_0)}\frac{3\br p_-}{|\bfx-\bfx_0|}\;
\rmd\bfx \nonumber \\
&\leq\ \frac{c}{R_0}\, \bigl(\|\bfu(\, .\, ,t)\|_2^2+\|\bfb(\, .\,
,t)\|_2^2\bigr)+3,
\end{align*}
where the constant $c$ is independent of $\bfx_0$, $t$, $R$ and $R_0$.

We have shown that the terms $R^{-1}\, \|\bfu(\, .\, ,t)\|_{2;\,
B_R(\bfx_0)}$ and $R^{-1}\, \|\bfb(\, .\, ,t)\|_{2;\,
B_R(\bfx_0)}$ are bounded above and the bound is independent of
$\bfx_0$, $t$ and $R$ for $\bfx_0\in\R^3$,
$t\in(0,t_0)\smallsetminus\cT$ and $R\in(0,R_0)$. If $R>R_0$ then,
obviously, $R^{-1}\, \|\bfu(\, .\, ,t)\|_{2;\, B_R(\bfx_0)}\leq
R_0^{-1}\, \|\bfu(\, .\, ,t)\|_{2;\, \R^3}\leq c/R_0$, where the constant $c$
is independent of $\bfx_0$, $t$, $R$ and $R_0$. (The same
estimates also hold for function $\bfb$.) This shows that the
premises in the implications (\ref{2.10}) and (\ref{2.11}) are true,
and the suprema on the left hand sides of (\ref{2.10}) and
(\ref{2.11}) can be even considered over all $\bfx_0\in\R^3$ and
not only over $\bfx_0$ from a bounded domain $\Omega_0$. Thus, the
statements of the implications are also true for any bounded
domain $\Omega_0$ in $\R^3$. Combining this result with
(\ref{2.3}), we obtain (\ref{2.9}).

\subsection{The continuity of $I_d^{(2)}$ in
$\R^3\times(0,t_0]$.} \label{SS2.5} Let $d>0$. Recall that the
function $I_d^{(2)}$ is defined in (\ref{2.2}). 
Assume that a sequence of points $\{(\bfx_n,t_n)\}$ in
$\R^3\times(0,t_0]$ converges to a point
$(\bfx_*,t_*)\in\R^3\times(0,t_0]$ as $n\to\infty$. 
Obviously,
\begin{align}
\bigl| I_d^{(2)}(\bfx_n,t_n) - I_d^{(2)}(\bfx_*,t_*)
\bigr| 
\leq \bigl| I_d^{(2)}(\bfx_n,t_n)-I_d^{(2)}(\bfx_n,t_*)
\bigr|+\bigl| I_d^{(2)}(\bfx_n,t_*)-I_d^{(2)}(\bfx_*,t_*) \bigr|.
\label{2.13}
\end{align}
For simplicity, we denote by
$\bbU(\bfy,t)$ and $\bbB(\bfy,t)$ the expressions $\bfu(\bfy,t)\otimes
\bfu(\bfy,t)$ and $\bfb(\bfy,t)\otimes\bfb(\bfy,t)$, respectively. Then the first
term on the right hand side can be estimated as follows:
\begin{align*}
&\bigl| I_d^{(2)} (\bfx_n,t_n)-I_d^{(2)}(\bfx_n,t_*) \bigr| \\
&=\ \biggl| \int_{\R^3\smallsetminus B_{d}(\bfx_n)}
\bbK(\bfy-\bfx_n):\bigl[ \bbU(\bfy,t_n)-\bbU(\bfy,t_*)-
\bbB(\bfy,t_n)-\bbB(\bfy,t_*) \bigr]\; \rmd\bfy \biggr| \\
&\leq\ \frac{c}{d^3}\ \int_{\R^3} \Bigl( \bigl|
\bbU(\bfy,t_n)-\bbU(\bfy,t_*) \bigr|+\bigl|
\bbB(\bfy,t_n)-\bbB(\bfy,t_*) \bigr| \Bigr)\; \rmd\bfy.
\end{align*}
Since $t_*\in(0,t_0]$, the right hand side tends to zero for
$n\to\infty$ due to the continuity of $\|\bfu(\, .\, ,t)\|_2$ and
$\|\bfb(\, .\, ,t)\|_2$ for $t\in(0,t_0]$. The second term on the
right hand side of (\ref{2.13}) can be estimated in the following way:
\begin{align}
& \bigl| I_d^{(2)}(\bfx_n,t_*)-I_d^{(2)}(\bfx,t_*) \bigr|
\nonumber \\
&=\ \biggl| \int_{\R^3\smallsetminus B_{d}(\bfx_n)}
\bbK(\bfy-\bfx_n):\bigl[\bbU(\bfy,t_*)-
\bbB(\bfy,t_*)\bigr]\; \rmd\bfy \nonumber \\
&\quad -\int_{\R^3\smallsetminus B_{d}(\bfx_*)}
\bbK(\bfy-\bfx_*):\bigl[\bbU(\bfy,t_*)-
\bbB(\bfy,t_*)\bigr]\; \rmd\bfy \biggr| \nonumber \\
&\leq\ \biggl| \biggl( \int_{\R^3\smallsetminus
B_{d}(\bfx_n)}-\int_{\R^3\smallsetminus B_{d}(\bfx_*)} \biggr)\
\bbK(\bfy-\bfx_n):\bigl[\bbU(\bfy,t_*)-
\bbB(\bfy,t_*)\bigr]\; \rmd\bfy \biggr| \nonumber \\
&\quad +\biggl| \int_{\R^3\smallsetminus B_{d}(\bfx_*)}
\bigl[\bbK(\bfy-\bfx_n)-\bbK(\bfy-\bfx_*)\bigr]:\bigl[\bbU(\bfy,t_*)
-\bbB(\bfy,t_*)\bigr]\; \rmd\bfy \biggr| \nonumber \\
&=\ \biggl| \biggl(
\int_{B_{d}(\bfx_*)\smallsetminus
B_{d}(\bfx_n)}-\int_{B_{d}(\bfx_n)\smallsetminus B_{d}(\bfx_*)}
\biggr)\ \bbK(\bfy-\bfx_n):\bigl[\bbU(\bfy,t_*)-
\bbB(\bfy,t_*)\bigr]\; \rmd\bfy \biggr| \nonumber \\
&\quad +\biggl| \int_{\R^3\smallsetminus B_{d}(\bfx_*)}
\bigl[\bbK(\bfy-\bfx_n)-\bbK(\bfy-\bfx_*)\bigr]:\bigl[\bbU(\bfy,t_*)
-\bbB(\bfy,t_*)\bigr]\; \rmd\bfy \biggr|. \label{2.14}
\end{align}
If $n$ is so large that $|\bfx_n-\bfx_*|<d$ then the first modulus
on the right hand side is less than or equal to
\begin{align*}
\frac{c}{d^3}\ \biggl(\int_{B_{d}(\bfx_*)\smallsetminus
B_{d}(\bfx_n)}+\int_{B_{d}(\bfx_n)\smallsetminus B_{d}(\bfx_*)}
\biggr)\ \bigl|\bbU(\bfy,t_*)- \bbB(\bfy,t_*)\bigr|\; \rmd\bfy,
\end{align*}
where the constant $c$ is independent of $n$. This tends to zero for
$n\to\infty$ because both $\bbU(\, .\, ,t_*)$ and $\bbB(\, .\,
,t_*)$ are in $L^1(\R^3)^{3\times 3}$ and the measures of
$B_{d}(\bfx_*)\smallsetminus B_{d}(\bfx_n)$ and
$B_{d}(\bfx_n)\smallsetminus B_{d}(\bfx_*)$ tend to zero as
$n\to\infty$. In order to show that the second modulus on the
right hand side of (\ref{2.14}) also tends to zero as
$n\to\infty$, consider $n$ so large that
$|\bfx_n-\bfx_*|<\frac{1}{2}\br d$. Then
$|\bfy-\bfx_n|\geq\frac{1}{2}\br d$ for $\bfy\in\R^3\smallsetminus
B_{d}(\bfx_*)$. Obviously, for these $y$, the inequality
$|\bfy-\bfx_*|\geq d$ also holds true. Hence
\begin{align*}
\bigl| k_{ij}(\bfy & -\bfx_n)-k_{ij}(\bfy-\bfx_*) \bigr|\ =\
\biggl| \frac{\partial^2}{\partial y_i\, \partial y_j}\, \Bigl(
\frac{1}{|\bfy-\bfx_n|}-\frac{1}{|\bfy-\bfx_*|} \Bigr) \biggr|\
\leq\ c\, \frac{|\bfx_n-\bfx_*|}{d^4},
\end{align*}
where the constant $c$ is independent of $d$, $\bfx_n$ and $\bfx_*$. Thus, the
second modulus on the right hand side of (\ref{2.14}) is bounded
above by
\begin{displaymath}
c\, \frac{|\bfx_n-\bfx_*|}{d^4}\int_{\R^3}\, \bigl|
\bbU(\bfy,t_*)-\bbB(\bfy,t_*) \bigr|\; \rmd\bfy,
\end{displaymath}
which tends to zero as $n\to\infty$. Thus, we have shown that for
each $d>0$, the function $I_d^{(2)}$ is continuous on
$\R^3\times(0,t_0]$.

\subsection{The continuity of $p$ and $p_-$ in
$\bigl(\R^3\times(0,t_0]\bigr)\smallsetminus\cS_{(0,t_0]}$.}
\label{SS2.6} Recall that points of $\cS_{(0,t_0]}$ may appear
in $\R^3\times(0,t_0]$ only on the time level $t=t_0$, which
means $\cS_{(0,t_0]}=\cS_{\{t_0\}}$. Function $p$
satisfies
\begin{displaymath}
p(\bfx,t)\ =\ \frac{1}{4\pi}\
\bigl[ I_d^{(1)}(\bfx,t)+ I_d^{(2)}(\bfx,t) \bigr],
\end{displaymath}
where $I_d^{(1)}$ and $I_d^{(2)}$ are the functions, defined by
(\ref{2.1}) and (\ref{2.2}). Also recall that we have already
proven the continuity of $I_d^{(2)}$ in $\R^3\times(0,t_0]$ for
any $d>0$ in subsection \ref{SS2.5}. We still need to show that
$I_d^{(1)}$ is continuous in $\bigl(\R^3\times(0,t_0]\bigr)
\smallsetminus\cS_{\{t_0\}}$.

Function $I_d^{(1)}$ is continuous at each point $(\bfx,t)\in
\R^3\times(0,t_0]$, whose distance from $\cS_{\{t_0\}}$ is
greater than or equal to $2d$, due to the H\"older continuity
of $\bfu$ and $\bfb$ in $B_{2d}((\bfx,t))$ (the ball in
$\R^4$). Hence the same statement on continuity can also be
made on $I_d^{(1)}+I_d^{(2)}$. However, as the sum
$I_d^{(1)}+I_d^{(2)}$ is independent of $d$, because it equals
$4\pi p$, it is a continuous function on the whole set
$\bigl(\R^3\times (0,t_0]\bigr)\smallsetminus\cS_{\{t_0\}}$.

Consequently, both $p$ and $p_-$ are continuous functions in
$\bigl(\R^3\times (0,t_0]\bigr)\smallsetminus\cS_{\{t_0\}}$.

\subsection{The boundedness of $\|\cF_{\gamma}(p_-(\, .\,
,t))\|_{3/2}$ up to the epoch of irregularity $t_0$.}
\label{SS2.7} Let $\Omega_0$ be a bounded domain in $\R^3$ and
$d>0$. Since $\cS(t_0)$ is a closed subset of $\R^3$ of 1-dimensional
Hausdorff measure zero, we have
\begin{equation}
\int_{\Omega_0}\cF_{\gamma}^{3/2}(p_-(\bfy,t_0))\; \rmd\bfy\ =\
\lim_{d\to 0+} \int_{\Omega_0\smallsetminus U_d(\cS(t_0))}
\cF_{\gamma}^{3/2}(p_-(\bfy,t_0))\; \rmd\bfy,
\label{2.15}
\end{equation}
which does not exclude that both sides are infinity.
(Here, we denote by $U_d(\cS(t_0))$ the $d$--neighborhood of
set $\cS(t_0)$ in $\R^3$.) As $p_-(\, .\, ,t_0)$ is continuous
on
$\cM_1:=\bigl(\R^3\times(0,t_0]\bigr)\smallsetminus\cS_{\{t_0\}}$
and $\cM_2:=\bigl(\overline{\Omega_0}\smallsetminus U_d(
\cS(t_0))\bigr)\times[t_0/2,t_0]$ is a bounded closed subset of
$\cM_1$, the function $\cF_{\gamma}(p_-)$ is uniformly
continuous on $\cM_2$. 
Hence if we put 
\[
c_2 := \esssup_{0<t<t_0}\ \|\cF_{\gamma}(p_-(\, .\, ,t))\|_{3/2}^{3/2},
\]
then 
\[
\int_{\Omega_0\smallsetminus
U_d(\cS(t_0))}\cF_{\gamma}^{3/2}(p_-(\bfy,t_0))\; \rmd\bfy\ 
=\ \lim_{t\to t_0-}\int_{\Omega_0\smallsetminus
U_d(\cS(t_0))}\cF_{\gamma}^{3/2}(p_-(\bfy,t_0))\; \rmd\bfy \\
\leq\ c_2.
\]
This shows that the integral in the limit on the right hand side
of (\ref{2.15}) is finite and bounded by $c_2$. 
Since $c_2$ is independent of $d$, the integral on the left hand side
of (\ref{2.15}) is also bounded by $c_2$. 
Since this holds for any bounded domain $\Omega_0$, the integral
$\int_{\R^3}\cF_{\gamma}^{3/2}(p_-(\bfy,t_0))\; \rmd\bfy$ is bounded by $c_2$ as well.

\subsection{Completion of the proof of Theorem \ref{T1} under
condition a).} \label{SS2.8} In order to deny the existence of
a singular point of the suitable weak solution $(\bfu,\bfb,p)$
on the time level $t=t_0$, we will use the next lemma, which is a special case of Theorem 1.1 in
\cite{KaLee3}.

\begin{lemma} \label{L2.2}
Let $(\bfu,\bfb,p)$ be a suitable weak solution to the MHD
initial-value problem (\ref{1.1})--(\ref{1.4}) in $Q_T$ and 
$(\bfx_0,t_0)\in Q_T$. There exists $\epsilon_*>0$ (independent of
the solution $(\bfu,\bfb,p)$ and the point $(\bfx_0,t_0)$) such
that if $B_{R_*}(\bfx_0)\times(t_0-R_*^2,t_0)\subset Q_T$ for some
$R_*>0$ and
\begin{align}
& \sup_{0<R<R_*}\ \ \sup_{t_0-R^2\leq t\leq t_0}\ \frac{1}{R}\
(\|\bfu(\, .\, ,t)\|_{2;\, B_R(\bfx_0)}^2\ <\ \epsilon_*,
\label{2.16} \\
& \sup_{0<R<R_*}\ \ \sup_{t_0-R^2\leq t\leq t_0}\ \frac{1}{R}\
\|\bfb(\, .\, ,t)\|_{2;\, B_R(\bfx_0)}^2\ <\ \infty
\label{2.17}
\end{align}
then $(\bfx_0,t_0)$ is a regular point of the solution $(\bfu,\bfb,p)$.
\end{lemma}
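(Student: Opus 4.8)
The statement is an $\varepsilon$--regularity criterion of Caffarelli--Kohn--Nirenberg type, carrying the characteristic MHD asymmetry that smallness is imposed only on the scaled kinetic energy, while the scaled magnetic energy is merely required to be finite. Since the lemma is declared a special case of Theorem~1.1 in \cite{KaLee3}, the primary plan is to deduce it from that theorem. Theorem~1.1 furnishes a universal $\epsilon_*$ together with a regularity conclusion under the smallness of a scale--invariant quantity built from $\bfu$ (a scaled dissipation or a scaled $L^3$ norm), paired with mere boundedness of the corresponding magnetic quantity. The task therefore reduces to showing that the hypotheses \eqref{2.16}--\eqref{2.17}, which control the Morrey--type quantities $R^{-1}\|\bfu(\cdot,t)\|_{2;B_R(\bfx_0)}^2$ and $R^{-1}\|\bfb(\cdot,t)\|_{2;B_R(\bfx_0)}^2$ uniformly over \emph{all} scales $R<R_*$, imply the precise scale--invariant smallness required by \cite{KaLee3}.

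First I would introduce, on the parabolic cylinders $Q_r:=B_r(\bfx_0)\times(t_0-r^2,t_0)$, the usual dimensionless quantities: the scaled energies $A(\bfu,r)=\sup_t r^{-1}\|\bfu\|_{2;B_r}^2$ and $A(\bfb,r)$, the scaled dissipations $r^{-1}\int_{Q_r}|\nabla\bfu|^2$ and $r^{-1}\int_{Q_r}|\nabla\bfb|^2$, the scaled Lebesgue norms $r^{-2}\int_{Q_r}(|\bfu|^3+|\bfb|^3)$, and the scaled pressure $r^{-2}\int_{Q_r}|p|^{3/2}$. Hypotheses \eqref{2.16}--\eqref{2.17} say exactly that $A(\bfu,r)<\epsilon_*$ and $A(\bfb,r)<\infty$ for every $r<R_*$. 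Feeding this into the localized energy inequality satisfied by the suitable weak solution and interpolating $L^3$ between $L^2$ and $\Ws^{1,2}$ through Sobolev embedding, I would bound the scaled dissipations and the scaled $L^3$ norms in terms of $A(\bfu,r)$, $A(\bfb,r)$ and the scaled pressure. The pressure is split, using \eqref{1.5}, into a Calder\'on--Zygmund part supported in $B_r$, controlled by $r^{-2}\int_{Q_r}(|\bfu|^3+|\bfb|^3)$, and a harmonic tail that contributes a factor $\theta^{\beta}$ with some $\beta>0$ upon passing to a smaller cylinder $Q_{\theta r}$. Because the bounds in \eqref{2.16}--\eqref{2.17} hold at every scale below $R_*$, these estimates can be iterated along a geometric sequence of radii.

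The decisive point, and the main obstacle, is to respect the asymmetry: only $\bfu$ is small, whereas $\bfb$ and $p$ are merely bounded. That this can be done is structural. Every nonlinear contribution to the two energy balances carries at least one velocity factor: the transport and Bernoulli terms $(|\bfu|^2+|\bfb|^2+2p)\,(\bfu\cdot\nabla\psi)$ and $(\bfu\cdot\bfb)(\bfb\cdot\nabla\psi)$ are transported by $\bfu$, and the magnetic stretching $\int(\bfb\cdot\nabla\bfu)\cdot\bfb\,\psi$ becomes, after integration by parts and use of $\divg\bfb=0$, a term $\int\bfu\cdot(\bfb\cdot\nabla\bfb)\,\psi$ together with a term $b_ib_ju_i\,\partial_j\psi$ --- again linear in $\bfu$. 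Each such term can thus be estimated by the small scaled velocity norm times bounded scaled magnetic norms, so the smallness of $A(\bfu,r)$ alone closes the iteration, while the scaled pressure and scaled magnetic quantities remain bounded (by a constant depending on the finite supremum in \eqref{2.17}) without needing to be small. The careful bookkeeping that keeps exactly one velocity factor in each interaction, and thereby transfers the smallness hypothesis on $\bfu$ through the pressure estimate, is the technical heart of the matter. Once the iterated inequalities force $\limsup_{r\to0}\bigl(r^{-2}\int_{Q_r}|\bfu|^3\bigr)$ below the universal threshold, the conclusion that $(\bfx_0,t_0)$ is a regular point follows from the asymmetric $\varepsilon$--regularity statement of Theorem~1.1 in \cite{KaLee3}.
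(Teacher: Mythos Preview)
The paper gives no proof of Lemma~\ref{L2.2} at all: it is stated as a direct citation, introduced with the phrase ``which is a special case of Theorem~1.1 in \cite{KaLee3},'' and then used as a black box. So there is nothing to compare your argument against in the paper itself.

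Your proposal goes well beyond this. You sketch, in effect, the entire Caffarelli--Kohn--Nirenberg iteration for MHD with the velocity/field asymmetry, and you identify correctly the structural reason why smallness of the scaled kinetic energy alone suffices (every nonlinear interaction carries a velocity factor). As an outline of how such $\varepsilon$--regularity theorems are proved, this is sound. But it is also more than the paper asks for, and in one respect it may be misdirected: the phrase ``special case'' almost certainly means that Theorem~1.1 of \cite{KaLee3} is already stated with hypotheses of the form \eqref{2.16}--\eqref{2.17} (scaled $L^2$ energies in space, uniformly in time), so that the lemma follows by direct specialization rather than by the bridging argument you describe. Your plan to pass from \eqref{2.16}--\eqref{2.17} to a scaled $L^3$ or scaled-dissipation smallness via the localized energy inequality and pressure decomposition is essentially a re-proof of (part of) Theorem~1.1 itself, not a deduction of the lemma from that theorem. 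If your goal is to match the paper, a one-line appeal to \cite{KaLee3} is all that is required; if your goal is to supply an independent proof, your outline is a reasonable starting point, though the iteration bookkeeping you allude to is where the real work lies and would need to be carried out in full.
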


The proof of Theorem \ref{T1} under condition a) can now be
completed in this way.
Recall that, by assumption, the epoch of irregularity 
$t_0\in(0,T_0]$ is the first instant of time when a
singular point of the solution $(\bfu,\bfb,p)$ appears. Let
$\bfx_0\in\R^3$. Our aim is to show that there exists $R_*>0$ such
that (\ref{2.16}) and (\ref{2.17}) hold.

Note that due to Corollary \ref{C1} and the results of
subsection \ref{SS2.7}, we have for all $t\in(0,t_0]$,
\begin{equation}
\lim_{r\to 0+}\ \int_{B_r(\bfx_0)} \frac{p_-(\bfx,t)}
{|\bfx-\bf\bfx_0|}\; \rmd\bfx\ =\ 0. \label{2.18}
\end{equation}
Since the norm $\|\cF_{\gamma}(p_-(\, .\,
,t))\|_{3/2}$ is bounded as a function of $t$ on $(0,t_0]$, the
limit in (\ref{2.18}) is uniform with respect to $t\in(0,t_0]$.
Moreover, at time $t_0$, we also have
\begin{align}
&\int_{B_R(\bfx_0)} \frac{1}{|\bfx-\bfx_0|}\ \bigl[\br
|\bfup(\bfx,t_0)|^2+|\bfbr(\bfx,t_0)|^2+
2\br p_+(\bfx,t_0) \br\bigr]\; \rmd\bfx \nonumber \\
&=\ \int_{B_R(\bfx_0)} \frac{1}{|\bfx-\bfx_0|}\
\bigl[ |\bfup(\bfx,t_0)|^2+ |\bfbr(\bfx,t_0)|^2+ 2\br p(\bfx,t_0)
\bigr]\; \rmd\bfx 
+\int_{B_R(\bfx_0)}\frac{2\br p_-(\bfx,t_0)}
{|\bfx-\bfx_0|}\; \rmd\bfy \nonumber \\
&=\ \int_{\R^3\smallsetminus B_R(\bfx_0)}
\frac{R^2}{|\bfx-\bfx_0|^3}\  \bigl[\br 2\br|\bfu_{\rm
r}(\bfx,t)|^2-|\bfu_{\rm p}(\bfx,t)|^2- 2\br|\bfb_{\rm
r}(\bfx,t)|^2+|\bfb_{\rm p}(\bfx,t)|^2 \br\bigr]\;
\rmd\bfy \nonumber \\
&\quad + \int_{B_R(\bfx_0)}\frac{2\br
p_-(\bfx,t_0)}{|\bfx-\bfx_0|}\; \rmd\bfx\ \leq\ \frac{c}{R}+
\int_{B_R(\bfx_0)}\frac{2\br p_-(\bfx,t_0)}{|\bfx-\bfx_0|}\;
\rmd\bfx\ <\ \infty \label{2.19}
\end{align}
due to (\ref{2.8}) and (\ref{2.18}). Let $\epsilon_*$ be the
number in Lemma \ref{L2.2}.
Now, we choose $R_*>0$ so small that
\begin{equation}
\int_{B_{R_*}(\bfx_0)}\frac{2\br p_-(\bfx,t)}{|\bfx-\bfx_0|}\;
\rmd\bfx\ <\ \frac{\epsilon_*}{4} \label{2.20}
\end{equation}
for all $t\in(0,t_0]$ and
\[
\int_{B_{R_*}(\bfx_0)} \frac{1}{|\bfx-\bfx_0|}\ \bigl[\br
|\bfup(\bfx,t_0)|^2+|\bfbr(\bfx,t_0)|^2+ 2\br p_+(\bfx,t_0)
\br\bigr]\; \rmd\bfx\ <\ \frac{\epsilon_*}{4}.
\]
The latter is possible because the integral on the left hand
side of (\ref{2.19}) is finite and the integrand is nonnegative.
Then
\begin{align}
&\int_{\R^3\smallsetminus B_{R_*}(\bfx_0)} \frac{R_*^2}
{|\bfx-\bfx_0|^3}\ \bigl[\br 2\br|\bfu_{\rm r}(\bfx,t_0)|^2-
|\bfu_{\rm p}(\bfx,t_0)|^2-2\br|\bfb_{\rm r}(\bfx,t)|^2+
|\bfb_{\rm p}(\bfx,t_0)|^2 \br\bigr]\; \rmd\bfx \nonumber \\
&\equiv\ \int_{B_{R_*}(\bfx_0)}
\frac{1}{|\bfx-\bfx_0|}\ \bigl[\br
|\bfup(\bfx,t_0)|^2+|\bfbr(\bfx,t_0)|^2+
2\br p(\bfx,t_0) \br\bigr]\; \rmd\bfx \qquad \mbox{(by (\ref{2.8}))}
\nonumber \\
&\leq\ \int_{B_{R_*}(\bfx_0)}
\frac{1}{|\bfx-\bfx_0|}\ \bigl[\br
|\bfup(\bfx,t_0)|^2+|\bfbr(\bfx,t_0)|^2+ 2\br p_+(\bfx,t_0)
\br\bigr]\; \rmd\bfx\ <\ \frac{\epsilon_*}{4}. \label{2.21}
\end{align}
Applying (\ref{2.9}), we deduce that there exists a small 
positive number $\delta$ such that for $t\in(t_0-\delta,t_0]$,
\[
\int_{\R^3\smallsetminus B_{R_*}(\bfx_0)} \frac{R_*^2}
{|\bfx-\bfx_0|^3}\ \bigl[\br 2\br|\bfu_{\rm r}(\bfx,t)|^2-
|\bfu_{\rm p}(\bfx,t)|^2-2\br|\bfb_{\rm r}(\bfx,t)|^2+|\bfb_{\rm
p}(\bfx,t)|^2 \br\bigr]\; \rmd\bfx\ <\ \frac{\epsilon_*}{2}.
\]
Then, due to (\ref{2.8}) and
(\ref{2.20}), we also have for all $R\in(0,R_*)$ and on each time
level $t\in(t_0-\delta,t_0]$:
\begin{align*}
&\frac{1}{R} \int_{B_{R}(\bfx_0)} \Bigl( |\bfu|^2+ \frac{1}{2}\,
|\bfb|^2 \Bigr)\; \rmd\bfx\ \leq\ \frac{1}{R}\int_{B_{R}(\bfx_0)}
\Bigl( |\bfu|^2+\frac{1}{2}\, |\bfb|^2+3\br p_+\Bigr)\; \rmd\bfx \\
&=\ \frac{1}{R}\int_{B_R(\bfx_0)} \Bigl( |\bfu|^2+\frac{1}{2}\,
|\bfb|^2+3\br p \Bigr)\; \rmd\bfx+\frac{3}{R}\int_{B_R(\bfx_0)}
p_-\; \rmd\bfx \\
&=\ \int_{B_R(\bfx_0)} \frac{1}{|\bfx-\bfx_0|}\ \bigl[\br
|\bfup|^2+|\bfbr|^2+ 2\br p \br\bigr]\;
\rmd\bfx+\frac{3}{R}\int_{B_R(\bfx_0)} p_-\; \rmd\bfx \\
&=\ \int_{B_R(\bfx_0)} \frac{1}{|\bfx-\bfx_0|}\ \bigl[\br
|\bfup|^2+|\bfbr|^2+ 2\br p_+ \br\bigr]\;
\rmd\bfx-\int_{B_R(\bfx_0)} \frac{2\br p_-}{|\bfx-\bfx_0|}\;
\rmd\bfx 
+\frac{3}{R}\int_{B_R(\bfx_0)} p_-\; \rmd\bfx, 
\end{align*}
which is bounded by 
\begin{align*}
&\int_{B_{R_*}(\bfx_0)} \frac{1}{|\bfx-\bfx_0|}\ 
\bigl[\br |\bfup|^2+|\bfbr|^2+ 2\br p \br\bigr]\;
\rmd\bfx \\ 
&\quad + \int_{B_{R_*}(\bfx_0)} 
\frac{2\br p_-}{|\bfx-\bfx_0|}\; \rmd\bfx 
- \int_{B_R(\bfx_0)} \frac{2\br p_-}{|\bfx-\bfx_0|}\; \rmd\bfx 
+ \frac{3}{R}\int_{B_R(\bfx_0)} p_-\; \rmd\bfx \\
&=\ \int_{\R^3\smallsetminus B_{R_*}(\bfx_0)} \frac{R_*^2}
{|\bfx-\bfx_0|^3}\ \bigl[\br 2\br|\bfu_{\rm r}|^2- |\bfu_{\rm
p}|^2-2\br|\bfb_{\rm r}|^2- |\bfb_{\rm p}|^2 \br\bigr]\;
\rmd\bfx \\ 
&\quad + \int_{B_{R_*}(\bfx_0)} \frac{2\br p_-}{|\bfx-\bfx_0|}\;
\rmd\bfx - \int_{B_R(\bfx_0)} \frac{2\br p_-}{|\bfx-\bfx_0|}\; \rmd\bfx 
+ \frac{3}{R}\int_{B_R(\bfx_0)} p_-\; \rmd\bfx \\
&\leq\ \frac{\epsilon_*}{2}+\int_{B_{R_*}(\bfx_0)} \frac{3\br
p_-}{|\bfx-\bfx_0|}\; \rmd\bfx\ \leq\ \frac{\epsilon_*}{2}+
\frac{3\epsilon_*}{8}\ <\ \epsilon_*.
\end{align*}
As this holds independently of $R$ (for $R\in(0,R_*]$) and $t$
(for $t\in(t_0-\delta,t_0]$), we observe that (\ref{2.16}) and
(\ref{2.17}) hold. Thus, due to Lemma \ref{L2.2}, $(\bfx_0,t_0)$
is a regular point of the solution $(\bfu,\bfb,p)$. 
Since $\bfx_0$ was chosen arbitrarily in $\R^3$, the solution has no
singular points on the time level $t_0$. This is a contradiction
with the assumption that $t_0$ is an epoch of irregularity.
Consequently, the solution $(\bfu,\bfb,p)$ has no singular points
in $Q_T$. Using the results of \cite{MaNiSh}, we can state that
$\bfu$ and $\bfb$ are H\"older--continuous in $Q_T$. The proof of
Theorem \ref{T1} (under condition a)) is completed.


\section{The proof of Theorem \ref{T1} under condition b)}
\label{S3}

The subsections \ref{SS2.1}--\ref{SS2.5} can be repeated without
any changes. In subsection \ref{SS2.4} (on the left continuity
of $\bfu$ and $\bfb$ as functions of time in $(0,t_0]$), we
used condition a) of Theorem \ref{T1}. We show in the next
subsection \ref{SS3.1} that the same conclusion (formulated by
means of (\ref{2.9})) can also be proven if we consider
condition b) instead of condition a).

\subsection{The left continuity of $\bfu$ and $\bfb$ in the
$L^2$--norm at an epoch of irregularity.} \label{SS3.1} 
First we recall
that $\cB= \frac{1}{2}\br|\bfu|^2+\frac{1}{2}\br|\bfb|^2+p$. As
in subsection \ref{SS2.4}, we deduce that there exists a set
$\cT\subset(0,t_0)$ of the $1$-dimensional Lebesgue measure zero and $R_0>0$
such that for all $t\in(0,t_0)\smallsetminus\cT$,
\[
\sup_{R\in(0,R_0)} \int_{B_R(\bfx_0)}\frac{\cB_+(\bfx,t)}{|\bfx-\bfx_0|}\ \leq 1.
\]
Let $t\in(0,t_0)\smallsetminus\cT$, $\bfx_0\in\R^3$ and $R_0>0$. We
will use the identities (\ref{2.8}) in the form
\begin{align}
&\int_{B_R(\bfx_0)}\frac{1}{|\bfx-\bfx_0|}\ \bigl(2p+|\bfup|^2+
|\bfbr|^2\bigr)\, \rmd\bfy \nonumber \\
&=\ -\frac{1}{2R}\int_{B_R(\bfx_0)} \bigl(
|\bfu|^2+2\br|\bfb|^2\bigr)\, \rmd\bfx+\frac{3}{2R}
\int_{B_R(\bfx_0)} \Bigl(2p+|\bfu|^2+|\bfb|^2 \Bigr)\,
\rmd\bfy \nonumber \\
&=\ \int_{\R^3\smallsetminus B_R(\bfx_0)}
\frac{R^2}{|\bfx-\bfx_0|^3}\, \bigl[\br 2\br|\bfur|^2-|\bfup|^2-
2\br|\bfbr|^2+|\bfbp|^2 \br\bigr]\, \rmd\bfx.
\label{3.1}
\end{align}
Then, for $0<R\leq R_0$ and each time $t\in(0,t_0)$, we have
\begin{align}
&\frac{1}{2R} \int_{B_R(\bfx_0)}\bigl( |\bfu|^2+2\br
|\bfb|^2\bigr)\; \rmd\bfx\ \nonumber \\
&=\ \frac{3}{2R}\int_{B_R(\bfx_0)}\Bigl(
|\bfu|^2+|\bfb|^2+2p\Bigr)\; \rmd\bfx
-\int_{B_R(\bfx_0)} \frac{1}{|\bfx-\bfx_0|}\
\bigl( |\bfup|^2+ |\bfbr|^2+2p\bigr)\, \rmd\bfx \nonumber \\
&\leq\ \frac{3}{R}\int_{B_R(\bfx_0)} \cB_+\; \rmd\bfx+
\!\int_{B_R(\bfx_0)} \frac{1}{|\bfx-\bfx_0|}\ \bigl[ 2\cB_+-\bigl(
|\bfup|^2+ \bfbr|^2+2p\bigr)\bigr]\; \rmd\bfx
-\int_{B_R(\bfx_0)}
\frac{2\cB_+}{|\bfx-\bfx_0|}\; \rmd\bfx \nonumber \\
&\leq\ \int_{B_R(\bfx_0)}\frac{\cB_+}{|\bfx-\bfx_0|}\;
\rmd\bfx+\int_{B_R(\bfx_0)} \frac{1}{|\bfx-\bfx_0|}\ \bigl[
2\cB_+-\bigl( |\bfup|^2+|\bfbr|^2+2p\bigr)\bigr]\; \rmd\bfx, \nonumber \\
&=\ 3\int_{B_{R_0}(\bfx_0)}\frac{2\cB_+}{|\bfx-\bfx_0|}\;
\rmd\bfx 
-\int_{\R^3\smallsetminus B_{R_0}(\bfx_0)}
\frac{R_0^2}{|\bfx-\bfx_0|^3}\, \bigl[\br 2\br|\bfur|^2-|\bfup|^2-
2\br|\bfbr|^2+|\bfbp|^2 \br\bigr]\, \rmd\bfx. \label{3.2}
\end{align}
Obviously,
\begin{displaymath}
\biggl| \int_{\R^3\smallsetminus B_{R_0}(\bfx_0)}
\frac{R_0^2}{|\bfx-\bfx_0|^3}\, \bigl[\br 2\br|\bfur|^2-|\bfup|^2-
2\br|\bfbr|^2+|\bfbp|^2 \br\bigr]\, \rmd\bfx \biggr|\ \leq\
\frac{c}{R_0}\br,
\end{displaymath}
where the constant $c$ is independent of $\bfx_0$, $t$, $R$ and $R_0$. The
boundedness of the first term on the right hand side of
(\ref{3.2}), independent of $t$ for
$t\in(0,t_0)\smallsetminus\cT$, can now be justified by means
of the same arguments as the boundedness of the analogous
integral in subsection \ref{SS2.5}. The validity of the
premises in the implications (\ref{2.10}) and (\ref{2.11}) can
now be also confirmed in the same way as at the end of
subsection \ref{SS2.4}. The statements of these implications
and (\ref{2.3}) imply that (\ref{2.9}) holds.

The contents of subsections \ref{SS2.6} and \ref{SS2.7} can be
copied with the only change that we replace $p_-$ by $\cB_+$
and we also use the H\"older--continuity of $\bfu$ and $\bfb$
in the neighborhood of regular points. Instead of subsection
\ref{SS2.8}, where the proof of Theorem \ref{T1} was completed
under condition a), now we have the following subsection
\ref{SS3.2}.

\subsection{Completion of the proof of Theorem \ref{T1} under
condition b).} \label{SS3.2} 
Assume the condition b) of Theorem \ref{T1} holds. 
Let $t_0$ be an epoch of irregularity of the
solution $\bfu$, $\bfb$, $p$. Let $\bfx_0\in\R^3$. We will show
that there exists $R_*>0$ such that (\ref{2.16}) holds.

Let $R>0$ and $t\in(t_0-\delta,t_0]$. Using the identity between
the first two lines in (\ref{3.1}), and at the end also the
identity between the first and the third lines, we get
\begin{align}
& \frac{1}{2R} \int_{B_R(\bfx_0)}\bigl(|\bfu|^2+2\br|\bfb|^2\bigr)\; \rmd\bfx\  \nonumber \\
&=\ \frac{3}{2R}\int_{B_R(\bfx_0)}\bigl( |\bfu|^2+ |\bfb|^2+2p\bigr)\;
\rmd\bfx 
-\int_{B_R(\bfx_0)} \frac{1}{|\bfx-\bfx_0|}\
\bigl( |\bfup|^2+|\bfbr|^2+2p\bigr)\, \rmd\bfx \nonumber \\
\noalign{\vskip 2pt}
&\leq\ \frac{3}{R}\int_{B_R(\bfx_0)}\cB_+\; \rmd\bfx
-\int_{B_R(\bfx_0)}\frac{2\cB_+}{|\bfx-\bfx_0|}\; \rmd\bfx
\nonumber \\
&\quad +\int_{B_R(\bfx_0)} \frac{1}{|\bfx-\bfx_0|}\
\bigl[ 2\cB_+-\bigl( |\bfup|^2+|\bfbr|^2+2p\bigr)\bigr]\; \rmd\bfx
\nonumber \\ \noalign{\vskip 2pt}
&\leq\ \int_{B_R(\bfx_0)}\frac{\cB_+}{|\bfx-\bfx_0|}\; \rmd\bfx+
\int_{B_R(\bfx_0)} \frac{1}{|\bfx-\bfx_0|}\
\bigl(|\bfur|^2+|\bfbp|^2\bigr)\; \rmd\bfx \nonumber \\
\noalign{\vskip 2pt}
&\quad +\int_{B_R(\bfx_0)} \frac{1}{|\bfx-\bfx_0|}\
\bigl[ 2\cB_+-\bigl( |\bfu|^2+|\bfb|^2+ 2p\bigr)\bigr]\; \rmd\bfx,
\label{3.3}
\end{align}
which is finite, because it is the same as 
\begin{align}
&3\int_{B_R(\bfx_0)} \frac{\cB_+}{|\bfx-\bfx_0|}\ \;
\rmd\bfx+\int_{B_R(\bfx_0)} \frac{1}{|\bfx-\bfx_0|}\ \bigl(
|\bfur|^2+|\bfbp|^2 \bigr)\; \rmd\bfx \nonumber \\
\noalign{\vskip 2pt}
&\quad -\int_{B_R(\bfx_0)} \frac{1}{|\bfx-\bfx_0|}\
\bigl( |\bfu|^2+|\bfb|^2+2p\bigr)\; \rmd\bfx \nonumber \\
\noalign{\vskip 2pt}
&=\ 3\int_{B_R(\bfx_0)}\frac{\cB}{|\bfx-\bfx_0|}\;
\rmd\bfx-\int_{B_R(\bfx_0)} \frac{1}{|\bfx-\bfx_0|}\ \bigl(
|\bfup|^2+|\bfbr|^2+2p\bigr)\; \rmd\bfx \nonumber
\\ \noalign{\vskip 2pt}
&=\ 3\int_{B_R(\bfx_0)}\frac{\cB_+}{|\bfx-\bfx_0|}\;
\rmd\bfx \nonumber \\
&\quad -\int_{\R^3\smallsetminus B_R(\bfx_0)}
\frac{R^2}{|\bfx-\bfx_0|^3}\, \bigl[\br 2\br|\bfur|^2-|\bfup|^2-
2\br|\bfbr|^2+|\bfbp|^2 \br\bigr]\, \rmd\bfx \label{3.4} \\
\noalign{\vskip 2pt}
&\leq\ 3\int_{B_R(\bfx_0)}\frac{\cB_+}{|\bfx-\bfx_0|}\;
\rmd\bfx+\frac{c}{R}, \nonumber
\end{align}
where the constant $c$ is independent of $\bfx_0$, $t$ and $R$.

Let $\epsilon_*$ be the number in Lemma \ref{L2.2}. By analogy
with (\ref{2.18}), we have
\begin{equation}
\lim_{r\to 0+}\ \int_{B_r(\bfx_0)}
\frac{\cB_+(\bfx,t)}{|\bfx-\bfx_0|}\; \rmd\bfx\ =\ 0 \label{3.5}
\end{equation}
uniformly with respect to $t\in(0,t_0]$. Choose $R_*>0$ so small
that
\begin{equation}
3\int_{B_{R_*}(\bfx_0)}\frac{\cB_+(\bfx,t)}{|\bfx-\bfx_0|}\;
\rmd\bfx\ <\ \frac{\epsilon_*}{4} \label{3.6}
\end{equation}
for all $t\in(0,t_0]$ and
\begin{align}
&\int_{B_{R_*}(\bfx_0)} \frac{1}{|\bfx-\bfx_0|}\, \bigl(
|\bfur(\bfx,t_0)|^2+|\bfbp(\bfx,t)|^2\bigr)\; \rmd\bfx \nonumber \\
& +\int_{B_{R_*}(\bfx_0)} \frac{1}{|\bfx-\bfx_0|}\, \bigl[
\bigl(|\bfu(\bfx,t_0)|^2+|\bfb(\bfx,t_0)|^2+
2p(\bfx,t_0)\bigr)_{-}\bigr]\Bigr)\; \rmd\bfx\ <\
\frac{\epsilon_*}{4}. \label{3.7}
\end{align}
This choice of $\epsilon_*$ is possible, because of (\ref{3.5})
and due to the fact that the second integral in (\ref{3.7}) equals
the right hand side of (\ref{3.3}) (with $R=R_*$ and $t=t_0$),
which is finite and consists of three integrals over
$B_{R_*}(\bfx_0)$ with nonnegative integrands. Then, by analogy
with (\ref{2.21}) and from the comparison of (\ref{3.3}) with
(\ref{3.4}), we obtain
\begin{align*}
&-\int_{\R^3\smallsetminus B_R(\bfx_0)} \frac{R^2}
{|\bfx-\bfx_0|^3}\, \bigl[\br 2\br|\bfur|^2-|\bfup|^2-
2\br|\bfbr|^2+|\bfbp|^2 \br\bigr]\, \rmd\bfx\, \biggl|_{t=t_0} \\
&=\ -\int_{B_R(\bfx_0)}\frac{\cB_+}{|\bfx-\bfx_0|}\; \rmd\bfx+
\int_{B_R(\bfx_0)} \frac{1}{|\bfx-\bfx_0|}\
\bigl(|\bfur|^2+|\bfbp|^2\bigr)\; \rmd\bfx\, \biggl|_{t=t_0}
\nonumber \\ \noalign{\vskip 2pt}
&\quad +\int_{B_R(\bfx_0)} \frac{1}{|\bfx-\bfx_0|}\
\bigl(|\bfu|^2+|\bfb|^2+2p\bigr)_{-}\; \rmd\bfx\,
\biggl|_{t=t_0}\ \leq\ \frac{\epsilon_*}{4}\br.
\end{align*}
Due to (\ref{2.9}), there exists $\delta>0$ so small that
\begin{displaymath}
-\int_{\R^3\smallsetminus B_R(\bfx_0)} \frac{R^2}
{|\bfx-\bfx_0|^3}\, \bigl[\br 2\br|\bfur|^2-|\bfup|^2-
2\br|\bfbr|^2+|\bfbp|^2 \br\bigr]\, \rmd\bfx\ <\
\frac{\epsilon_*}{2}
\end{displaymath}
at all times $t\in(t_0-\delta,t_0]$. Applying this inequality with
(\ref{3.6}) and using the fact that the term on the left
hand side of (\ref{3.3}), which is $(2R)^{-1}\int_{B_R(\bfx_0)}
\bigl(|\bfu|^2+2\br|\bfb|^2\bigr)\; \rmd\bfx$, is equal to the
expression in (\ref{3.5}), we observe that the inequality
\begin{displaymath}
\frac{1}{2R}\int_{B_R(\bfx_0)} \bigl(|\bfu|^2+2\br|\bfb|^2\bigr)\;
\rmd\bfx\ <\ \frac{\epsilon_*}{4}+\frac{\epsilon_*}{2}\ =\
\frac{3\epsilon_*}{4}.
\end{displaymath}
holds for all $t\in(t_0-\delta,t_0]$ and $R\in(0,R_*]$. The proof
can now be completed in the same way as in the case of condition
a) in Section \ref{S2}.


\section*{Appendix}

Here, we return to the validity of formulas (\ref{2.5}) and
(\ref{2.6}). Recall that $\bfx\in\R^3$, $\bfx_0\in\R^3$, $R>0$
and $\alpha\in[0,1]$. We have
\begin{equation}
\int_{B_R(\bfx_0)}\frac{|\bfx_0-\bfy|^{-\alpha}}{|\bfx-\bfy|}\;
\rmd\bfy\ =\ \int_0^Rr^{-\alpha}\, \biggl(\int_{S_r(\bfx_0)}
\frac{\rmd_{\bfy}S}{|\bfx-\bfy|}\biggr)\; \rmd r. \tag{A1}
\end{equation}
The inside integral over $S_r(\bfx_0)$ depends on $\bfx$ only
through $|\bfx_0-\bfx|$. Thus, we may assume, without loss of
generality, that $\bfx_0=\bfzero$ and $\bfx=(0,0,ra)$, where
$a=|\bfx|/r$. We use the transformation to the spherical
coordinates: $\bfy\ =\ \bigl( r\, \cos\varphi\, \sin\vartheta,\,
r\, \sin\varphi\, \sin\vartheta, r\, \cos\vartheta\bigr)$. 
The Jacobian is equal to $r^2\, \sin\vartheta$. Then
\begin{displaymath}
|\bfx-\bfy|^2\ =\ \bigl|\, \bigl( r\, \cos\varphi\,
\sin\vartheta,\, r\, \sin\varphi\, \sin\vartheta,\, r\,
\cos\vartheta-ra \bigr) \bigr|^2\ =\ r^2\, \bigl[\br 1+a^2-2a\,
\cos\vartheta\br\bigr],
\end{displaymath}
and therefore, using also the change of variables $1+a^2-2a\,
\cos\vartheta=z$, we obtain
\begin{align*}
\int_{S_r(\bfzero)}\frac{\rmd_{\bfy}S}{|\bfx-\bfy|}\ &=\
2\pi\int_0^{\pi}\frac{r^2\, \sin\vartheta}{r\, \sqrt{1+a^2-2a\,
\cos\vartheta}}\; \rmd\vartheta\ =\ \frac{\pi r^2}{|\bfx|}
\int_{1+a^2-2a}^{1+a^2+2a} \frac{\rmd z}{\sqrt{z}} \\
&=\ \frac{2\pi r^2}{|\bfx|}\, \bigl[ \sqrt{z}
\bigr]_{(1-a)^2}^{(1+a)^2}\ =\ \frac{2\pi r^2}{|\bfx|}\, \bigl[
(1+a)\mp(1-a)],
\end{align*}
where the sign ``$-$'' holds if $1-a\geq 0$, which means
$|\bfx|\leq r$ and ``$+$'' holds if $1-a<0$, which means
$|\bfx|>r$. Thus, returning to a general point $\bfx_0$ instead of
the special case $\bfx_0=\bfzero$, we get
\begin{equation}
\int_{S_r(\bfx_0)}\frac{\rmd_{\bfy}S}{|\bfx-\bfy|}\ =\ \left\{
\begin{array}{ll} 4\pi r & \mbox{if}\ |\bfx-\bfx_0|\leq r, \\
4\pi r^2\, |\bfx-\bfx_0|^{-1} & \mbox{if}\ |\bfx-\bfx_0|>r.
\end{array} \right. \tag{A2}
\end{equation}

Let us at first assume that $|\bfx-\bfx_0|<R$. Then, by (A1),
\begin{align*}
\int_{B_R(\bfx_0)} \frac{|\bfx_0-\bfy|}{|\bfx-\bfy|}\; \rmd\bfy\
&=\ \int_0^Rr^{-\alpha}\int_{S_r(\bfx_0)}\frac{\rmd_{\bfy}S}
{|\bfx-\bfy|}\; \rmd r \\ \noalign{\vskip 1pt}
&=\ \int_0^{|\bfx_0-\bfx|} \frac{4\pi r^{2-\alpha}}
{|\bfx-\bfx_0|}\; \rmd r+\int_{|\bfx-\bfx_0|}^R4\pi r^{1-\alpha}\,
\rmd r \\ \noalign{\vskip 1pt}
&=\ \frac{4\pi\, |\bfx-\bfx_0|^{2-\alpha}}{3-\alpha}+\frac{4\pi
R^{2-\alpha}}{2-\alpha}-\frac{4\pi\, |\bfx-\bfx_0|^{2-\alpha}}
{2-\alpha}\ \\
&=\ \frac{4\pi\, R^{2-\alpha}}{2-\alpha}-\frac{4\pi\,
|\bfx-\bfx_0|^{2-\alpha}}{(3-\alpha)(2-\alpha)},
\end{align*}
which implies 
\begin{equation}
\nabla_{\bfx}^2 \int_{B_R(\bfx_0)} \frac{|\bfx_0-\bfy|}
{|\bfx-\bfy|}\; \rmd\bfy\ =\ -\frac{4\pi}{(3-\alpha)(2-\alpha)}\,
\nabla_{\bfx}^2\br |\bfx-\bfx_0|^{2-\alpha}. \tag{A3}
\end{equation}
Since
\begin{equation}
\nabla_{\bfx}^2\br |\bfx-\bfx_0|^{\beta}\ =\ \beta(\beta-2)\,
|\bfx-\bfx_0|^{\beta-4}\, (\bfx-\bfx_0)\otimes(\bfx-\bfx_0)+
\beta\, |\bfx-\bfx_0|^{\beta-2}, \tag{A4}
\end{equation}
equality (A3) (where we use (A4) with $\beta=\alpha-2$) yields
(\ref{2.5}).

Suppose now that $|\bfx-\bfx_0|>R$. Then (A1) and (A2) imply that
\[
\int_{B_R(\bfx_0)} \frac{|\bfx_0-\bfy|}{|\bfx-\bfy|}\; \rmd\bfy\
=\ \int_0^Rr^{-\alpha}\int_{S_r(\bfx_0)}\frac{\rmd_{\bfy}S}
{|\bfx-\bfy|}\; \rmd r\ =\ \int_0^R \frac{4\pi
r^{2-\alpha}}{|\bfx-\bfx_0|}\; \rmd r 
=\ \frac{4\pi R^{3-\alpha}}{3-\alpha}\, \frac{1}{|\bfx-\bfx_0|}.
\]
This together with (A4) (which we use with $\beta=-1$) 
yields (\ref{2.6}).

\section*{Acknowledgement}

J. Neustupa has been supported by the
Academy of Sciences of the Czech Republic (RVO 67985840) and by
the Grant Agency of the Czech Republic, grant No.~GA19-042435. 
M. Yang has been supported by the National Research
Foundation of Korea No. 2016R1C1B2015731 and No.~2015R1A5A1009350.


\begin{thebibliography}{99}

\bibitem{BdV2}
H.~Beir\~ao da Veiga: A sufficient condition on the pressure for
the regularity of weak solutions to the Navier-Stokes equations.
\textit{J.~Math.~Fluid Mech.} \textbf{2} (2000), 96--106.

\bibitem{BeGa}
L.~Berselli, G.~P.~Galdi: Regularity criterions involving the
pressure for the weak solutions to the Navier-Stokes equations.
\textit{Proc.~Amer.~Math.~Soc.} \textbf{130} (2002), no.~12,
3585--3595.

\bibitem{BoCoPa}
S.~Bosia, M.~Conti, V.~Pata: A regularity criterion for the
Navier-Stokes equations in terms of the pressure gradient.
\textit{Cent.~Eur.~J.~Math.} \textbf{12} (2014), 1015--1025.

\bibitem{CaFaZh}
Z.~Cai, J.~Fan, J.~Zhai: Regularity criteria in weak spaces for
$3$--dimensional Navier-Stokes equations in terms of the pressure.
\textit{Differential Integral Equations} \textbf{23} (2010),
no.~11-12, 1023--1033.

\bibitem{ChaeLee} D.~Chae, J.~Lee: Regularity criterion in
terms of pressure for the Navier-Stokes equations.
\textit{Nonlinear Analysis} \textbf{46} (2001), 727--735.

\bibitem{ChZha}
Q.~Chen, Z.~Zhang: Regularity criterion via the pressure on weak
solutions to the 3D Navier-Stokes equations.
\textit{Proc.~Amer.~Math.~Soc.} \textbf{136} (2007), no.~6,
1829--1837.

\bibitem{FaJiNi}
J.~Fan, S.~Jiang, G.~Ni: On regularity criteria for the
$n$-dimensional Navier--Stokes equations in terms of the pressure.
\textit{J.~Differential Equations} \textbf{244} (2008),
2963--2979.

\bibitem{FaOz}
J.~Fan, T.~Ozawa: Regularity criterion for weak solutions to
the Navier-Stokes equations in terms of pressure.
\textit{J.~Inequal.~Appl.} 2008, article ID 412678, 6 pages,
DOI: 10.1155/2008/412678.

\bibitem{Ga1}
G.~P.~Galdi: An Introduction to the Navier-Stokes
initial--boundary value problem. In \textit{Fundamental Directions
in Mathematical Fluid Mechanics,} ed.~G.~P.~Galdi, J.~Heywood,
R.~Rannacher, series ``Advances in Mathematical Fluid Mechanics''.
Birkh\"auser, Basel 2000, pp.~1--98.

\bibitem{HeXi2}
Ch.~He, Z.~Xin: Partial regularity of suitable weak solutions to
the incompressible magnetohydrodynamic equations.
\textit{J.~Func.~Analysis} \textbf{227} (2005), 113--152.

\bibitem{KaLee1}
K.~Kang, J.~Lee: On regularity criteria in conjunction with the
pressure of the Navier--Stokes equations.
\textit{Internat.~Math.~Res.~Notes} Vol.~2006, Issue 9, Article ID
80762, 25 pp.

\bibitem{KaLee2}
K.~Kang, J.~Lee: Erratum: On regularity criteria in conjunction
with the pressure of the Navier--Stokes equations.
\textit{Internat.~Math.~Res.~Notes} Vol.~2010, Issue 9,
1772--1774.

\bibitem{KaLee3}
K.~Kang, J.~Lee: Interior reguarity criteria for suitable weak
solutions of the magnetohydrodynamics equations.
\textit{J.~Diff.~Equations} \textbf{247} (2009), 2310--2330.

\bibitem{MaNiSh}
A.~Mahalov, B.~Nicolaenko, T.~Shilkin: $L_{3,\infty}$ solutions to
the MHD equations. \textit{J.~of Math.~Sci.} \textbf{143}, no.~2,
2007, 2911--2923.

\bibitem{NeNe}
J.~Ne\v{c}as, J.~Neustupa: New conditions for local regularity of
a suitable weak solution to the Navier-Stokes equations.
\textit{J.~Math.~Fluid~Mech.} \textbf{4} (2002), 237--256.

\bibitem{Ne1}
J.~Neustupa: A contribution to the theory of regularity of a weak
solution to the Navier-Stokes equations via one component of
velocity and other related quantities. \textit{J.~Math.~Fluid
Mech.} \textbf{20} (2018), no.~3, 1249--1267.

\bibitem{Ne2}
J.~Neustupa: The role of pressure in the theory of weak solutions
of the Navier--Stokes equations. In \textit{Fluids under
Pressure,} eds.~T.~Bodn\'ar, G.~P.~Galdi and
\v{S}.~Ne\v{c}asov\'a, to be published by Birkh\"auser.

\bibitem{NeYa}
J.~Neustupa, M.~Yang: On the pressure in the theory of MHD
equations. \textit{Submitted.}

\bibitem{SeSve}
G.~Seregin, V.~\v{S}ver\'ak: Navier-Stokes equations with lower
bounds on the pressure. \textit{Arch.~Rat. Mech. Anal.}
\textbf{163} (2002), 65--86.

\bibitem{Str2}
M.~Struwe: On a Serrin--type regularity criterion for the
Navier-Stokes equations in terms of the pressure.
\textit{J.~Math.~Fluid Mech.} \textbf{9} (2007), 235--242.

\bibitem{Su}
T.~Suzuki: Regularity criteria of weak solutions in terms of
pressure in Lorentz spaces to the Navier-Stokes equations.
\textit{J.~Math. Fluid Mech.}~\textbf{14} (2012), no.~4, 653--660.

\end{thebibliography}
\end{document}